\DeclareMathOperator{\id}{id}
\DeclareMathOperator{\Ker}{Ker \,}
\DeclareMathOperator{\Span}{Span}
\DeclareMathOperator{\Tr}{Tr \,}
\DeclareMathOperator{\SO}{SO}
\DeclareMathOperator{\SL}{SL}
\DeclareMathOperator{\GL}{GL}
\DeclareMathOperator{\diag}{diag}
\DeclareMathOperator{\Aut}{Aut}
\DeclareMathOperator{\ad}{ad}
\DeclareMathOperator{\ric}{Ric}
\DeclareMathOperator{\Der}{Der}
\DeclareMathOperator{\rk}{rk}
\DeclareMathOperator{\End}{End}
\DeclareMathOperator{\Mat}{Mat}
\renewcommand \a{\alpha}
\newcommand \K{\delta}
\newcommand \la{\lambda}
\newcommand \Rn{\mathbb R^n}
\newcommand \bc{\mathbb{C}}
\newcommand \br{\mathbb{R}}
\newcommand \<{\langle}
\renewcommand \>{\rangle}
\newcommand \ip{\<\cdot,\cdot\>}
\newcommand \g{\mathfrak{g}}
\renewcommand \sl{\mathfrak{sl}}
\newcommand \n{\mathfrak{n}}
\newcommand \m{\mathfrak{m}}
\newcommand \z{\mathfrak{z}}
\newcommand \reg{\mathfrak{r}}
\renewcommand \b{\mathfrak{b}}
\newcommand \f{\mathfrak{f}}
\renewcommand \t{\mathfrak{t}}
\newcommand\sk[1]{\tensor[^s]{{#1}}{}}
\theoremstyle{plane}
\newtheorem{theorem}{Theorem}
\newtheorem*{theorem*}{Theorem}
\newtheorem*{corollary*}{Corollary}
\newtheorem{lemma}{Lemma}
\newtheorem{proposition}{Proposition}
\newtheorem*{proposition*}{Proposition}
\newtheorem*{namedtheorem}{\theoremname}
\newcommand{\theoremname}{te}
\theoremstyle{definition}
\newtheorem{definition}{Definition}
\theoremstyle{remark}
\newtheorem{remark}{Remark}
\begin{document}

\title{Einstein solvmanifolds attached to two-step nilradicals}

\author{Y.Nikolayevsky}
\address{Department of Mathematics, La Trobe University, Victoria, 3086, Australia}
\email{y.nikolayevsky@latrobe.edu.au}

\date{\today}

\subjclass[2000]{Primary: 53C30, 53C25}

\keywords{Einstein solvmanifold, Einstein nilradical, two-step nilpotent Lie algebra}

\maketitle

\begin{abstract}
A Riemannian Einstein solvmanifold (possibly, any noncompact homogeneous Einstein space) is almost completely
determined by the nilradical of its Lie algebra. A nilpotent Lie algebra, which can serve as the nilradical of an
Einstein metric solvable Lie algebra, is called an Einstein nilradical. Despite
a substantial progress towards the understanding of Einstein nilradicals, there is still a lack
of classification results even for some well-studied classes of nilpotent Lie algebras,
such as the two-step ones. In this paper, we give a classification of two-step nilpotent Einstein nilradicals
in one of the rare cases when the complete set of affine invariants is known: for the two-step nilpotent Lie algebras
with the two-dimensional center. Informally speaking, we prove that such a Lie algebra is an Einstein nilradical,
if it is defined by a matrix pencil having no nilpotent blocks in the canonical form and no elementary divisors of a
very high multiplicity. We also discuss the connection between the property of a two-step nilpotent Lie algebra
and its dual to be an Einstein nilradical.
\end{abstract}

\section{Introduction}
\label{s:intro}

The theory of Riemannian homogeneous spaces with an Einstein metric splits into three very different cases
depending on the sign of the Einstein constant, the scalar curvature. Among them, the picture is complete only in the
Ricci-flat case: by the result of \cite{AK}, every Ricci-flat homogeneous space is flat.

The major open conjecture in the case of negative scalar curvature is the \emph{Alekseevski Conjecture} \cite{Al1}
asserting that a noncompact Einstein homogeneous space admits a simply transitive solvable isometry group. This is
equivalent to saying that any such space is a \emph{solvmanifold}, a solvable Lie group with a left-invariant Riemannian
metric satisfying the Einstein condition.

By a deep result of J.Lauret \cite{La5}, any Einstein solvmanifold is \emph{standard}. This means that the metric solvable
Lie algebra $\g$ of such a solvmanifold has the following property: the orthogonal complement to the derived algebra
of $\g$ is abelian. The systematic study of standard Einstein solvmanifolds (and the term ``standard") originated from
the paper of J.Heber \cite{Heb}.

On the Lie algebra level, all the metric Einstein solvable Lie algebras can be obtained as the result of the following
construction \cite{Heb, La1, La5, LW}. One starts with the three pieces of data: a nilpotent Lie algebra $\n$, a
semisimple derivation $\Phi$ of $\n$, and an inner product $\ip_\n$ on $\n$, with respect to which $\Phi$ is
symmetric. An extension of $\n$ by $\Phi$ is a solvable Lie algebra $\g=\br H \oplus \n$ (as a linear space) with
$(\ad_H)_{|\n}:=\Phi$. The inner product on $\g$ is defined by $\<H, \n\> = 0$, $\|H\|^2 = \Tr \Phi$ (and coincides with
the existing one on $\n$). The resulting metric solvable Lie algebra $(\g,\ip)$ is Einstein provided
$\n$ is ``nice" and the derivation $\Phi$ and the inner product $\ip_\n$ are
chosen ``in the correct way". Metric Einstein solvable Lie algebras of higher rank
(with the codimension of the nilradical greater than one) having the same nilradical $\n$ can be obtained from $\g$ via
a known procedure, by further adjoining to $\n$ semisimple derivation commuting with $\Phi$.

It turns out that the structure of an Einstein metric solvable Lie algebra is completely encoded in its nilradical in
the following sense: given a nilpotent Lie algebra $\n$, there is no more than one (possibly none) choice of $\Phi$ and
of $\ip_\n$, up to conjugation by $\Aut(\n)$ and scaling, which may result in an Einstein metric solvable Lie
algebra $(\g,\ip)$.

\begin{definition} \label{d:en}
A nilpotent Lie algebra is called an \emph{Einstein nilradical}, if it is the nilradical of an Einstein metric solvable
Lie algebra. A derivation $\Phi$ of an Einstein nilradical $\n$ and an inner product $\ip_\n$, for which
the metric solvable Lie algebra $(\g,\ip)$ is Einstein are called an \emph{Einstein derivation} and a
\emph{nilsoliton} inner product respectively.
\end{definition}

To a certain extent, the problem of classifying Einstein solvmanifolds (possibly, all noncompact Einstein
homogeneous spaces) is the problem of classifying Einstein nilradicals. The condition expressing the fact that a
given nilpotent Lie algebra is an Einstein nilradical is a system of algebraic equations (see Section~\ref{s:facts}),
which is quite difficult to analyze. Although a direct attack on that system
(solving it explicitly or proving the existence of the solution) can be successful for some classes of nilpotent Lie
algebras, it might not be very efficient in general.

An alternative approach to classifying Einstein nilradicals (or to deciding, whether
a given nilpotent Lie algebra is an Einstein nilradical) is the variational one. It originates from the fact
that all the Einstein Riemannian metrics on a compact manifold (not necessarily homogeneous) are the critical points of
the Hilbert functional, the total scalar curvature, on the space of metrics of volume $1$ \cite[Chapter~4]{Bes}.
For left-invariant metrics on a homogeneous manifold, the scalar curvature is
constant, so the Einstein metrics in the compact case can be found as the critical points of the scalar curvature
functional $sc$ on the space $\SL(n)/\SO(n)$ of the inner products of volume $1$ on the tangent space at the basepoint.
The same functional $sc$ also works fine for noncompact homogeneous spaces of unimodular groups
(see \cite{Jen} for the groups, \cite{Nik} for the general case). However, the solvable
case is different. As shown in \cite[Lemma~3.5]{Heb}, the functional $sc$ (and even its modifications) has no
critical points on the space of the inner products of the fixed volume on a solvable Lie algebra. One may try to restrict
one's attention to the nilradical and to hope that the critical points of $sc$ detect the nilsoliton inner
products on nilpotent Lie algebras. Unfortunately, this also fails: as it follows from \cite[Theorem 8.2]{La3},
$sc$ has no critical points on the space of the inner products of the fixed volume on a nonabelian Lie algebra. However,
the nilsoliton inner products on a nilpotent Lie algebra \emph{are} the critical points of the normalized squared norm
of the Ricci tensor \cite[Theorem~2.2]{La4}. This functional
is, in fact, the squared norm of the moment map of the $\SL(n)$-action on the space of metrics (more precisely, on the
space of Lie brackets), which enables one to use the powerful machinery of the Geometric Invariant Theory to study
Einstein nilradicals \cite{LW}. Another variational approach based on  \cite[Theorem 6.15]{Heb} is developed
in \cite{Ni4}: one still
uses the functional $sc$, but restricted to a smaller group $G_\phi \subset \SL(n)$. The group $G_\phi$ is constructed
from the so-called \emph{pre-Einstein derivation}, which is a certain distinguished semisimple derivation of a given
nilpotent Lie algebra (see Section~\ref{ss:es} for details).

\bigskip

In addition to general classification results obtained by the variational method, one might want to
have a reasonable supply of examples of nilpotent Lie algebras, which are, or which are not, Einstein nilradicals.
It seems to be a commonly accepted opinion that the general classification of nilpotent
Lie algebras, in any reasonable meaning, is out of question. However, it would be interesting to have a description
(or even a classification) of Einstein nilradicals in those classes of nilpotent Lie algebras, for which
such a classification is possible.

It is known, for instance, that the following nilpotent Lie algebras $\n$ are Einstein nilradicals:
$\n$ is abelian \cite{Al2}, $\n$ has a codimension one abelian ideal \cite{La2},
$\dim \n \le 6$ \cite{Wil, La2}, $\n$ is the algebra of strictly upper-triangular matrices \cite{Pay}
(see \cite[Section~6]{La4} for many other examples).
Free Einstein nilradical are classified in \cite{Ni2}: apart from the abelian and the two-step ones,
there are only six others. A characterization of Einstein nilradical with a simple Einstein derivation and
the classification of filiform Einstein nilradicals
(modulo known classification of filiform $\mathbb{N}$-graded Lie algebras) is given in \cite{Ni3}. Einstein
nilradicals of parabolic subalgebras of semisimple Lie algebras are studied in \cite{Tam}.

\bigskip

In this paper, we focus on two-step nilpotent Lie algebras. As it was independently proved
by P.~Eberlein \cite[Proposition~7.9]{Eb3} and the author \cite[Theorem~4]{Ni4}, \emph{a typical two-step nilpotent
Lie algebra is an Einstein nilradical of eigenvalue type $(1,2; q,p)$}. More precisely,
a two-step nilpotent Lie algebra $\n$ is said to be of type $(p, q)$, if $\dim \n = p+q$ and
$\dim [\n, \n] = p$ (clearly, $1 \le p \le D:=\frac12 q(q-1)$). Any such algebra is determined by a point in the
linear space $\mathcal{V}(p,q)=(\bigwedge^2 \br^q)^p$, with two points giving isomorphic algebras if and only if they
lie on the same orbit of the action of $\GL(q) \times \GL(p)$ on $\mathcal{V}(p,q)$. The space of isomorphism classes
of algebras of type $(p,q)$ is a compact non-Hausdorff space $\mathcal{X}(p,q)$,
the quotient of an open and dense subset of
$\mathcal{V}(p,q)$ by the action of $\GL(q) \times \GL(p)$.

\begin{proposition}[\cite{Eb3}, \cite{Ni4}]\label{p:twostepopen} Suppose $q \ge 6$ and $2 < p < \frac12 q(q-1)-2$,
or $(p,q)=(5,5)$. Then
\begin{enumerate}[\rm (i)]
    \item
    there is a continuum isomorphism classes of two-step nilpotent Lie algebras of type $(p,q)$; each of them
    has an empty interior in the space $\mathcal{V}(p,q)$ \emph{\cite{Eb1}};
    \item
    the space $\mathcal{V}(p,q)$ contains an open and dense subset corresponding to two-step nilpotent Einstein
    nilradicals of eigenvalue type $(1,2; q,p)$.
\end{enumerate}
\end{proposition}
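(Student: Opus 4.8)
The plan is to prove the two parts by a common dimension count, with part (ii) requiring in addition the variational characterization of Einstein nilradicals. I identify a point $v\in\mathcal{V}(p,q)=(\bigwedge^2\br^q)^p$ with a $p$-tuple $J_1,\dots,J_p$ of skew-symmetric forms on $\br^q$, giving $\n_v=\br^q\oplus\br^p$ with $[X,Y]=\sum_k\<J_kX,Y\>Z_k$; for $v$ in an open dense set the $J_k$ are in general position, so $[\n_v,\n_v]=\br^p=:\z$ and $\n_v$ is genuinely of type $(p,q)$. For part (i) I count orbit dimensions. Here $\dim\mathcal{V}(p,q)=pD$, while $\GL(q)\times\GL(p)$ has dimension $q^2+p^2$; the one-parameter subgroup $(\la I_q,\mu I_p)$ with $\la^2\mu=1$ acts trivially, so every orbit has dimension at most $q^2+p^2-1$. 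A direct computation gives, under the stated hypotheses, the strict inequality
\[
q^2+p^2-1<pD ,
\]
the tight cases being $(p,q)=(5,5)$ and the endpoints $p=3,\,p=D-3$ at $q=6$. Hence every orbit has positive codimension, so empty interior; and since a set of dimension $pD$ is not a countable union of submanifolds of smaller dimension, there is a continuum of orbits. This is Eberlein's argument \cite{Eb1}.

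For part (ii) I pass to the framework of \cite{Heb,LW,Ni4}. The map $\Phi=\diag(I_q,2I_p)$ is a semisimple derivation of every $\n_v$ (since $[\n_v,\n_v]\subseteq\z$), and a short computation with the defining equation $\Tr(\Phi\psi)=\Tr\psi$, $\psi\in\Der(\n_v)$, shows that for generic $v$ a scalar multiple of $\Phi$ is the pre-Einstein derivation; its eigenvalues are then in the ratio $1:2$ with multiplicities $q,p$, i.e. the eigenvalue type is $(1,2;q,p)$. By the criterion of \cite{Ni4}, resting on \cite[Thm.~6.15]{Heb} and the moment-map picture of \cite{LW}, $\n_v$ is an Einstein nilradical of this eigenvalue type exactly when the orbit of $v$ under the real reductive group $G_\Phi\subset\GL(q)\times\GL(p)$ — the codimension-one subgroup commuting with $\Phi$ and cut out by the relevant volume normalization — is closed, equivalently when $v$ is a polystable point for the linear $G_\Phi$-action. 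Thus it remains to show that the polystable locus is open and dense.

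Openness is the standard openness of the polystable stratum in geometric invariant theory, so the content is density, equivalently that the non-polystable locus is a proper subvariety. I would prove this by the Hilbert–Mumford criterion. A nontrivial one-parameter subgroup of $G_\Phi$ is an integer weight vector $(a_1,\dots,a_q;b_1,\dots,b_p)$ subject to the trace constraint defining $G_\Phi$, assigning to the coordinate of $v$ along $e_i\wedge e_j\otimes Z_k$ the weight $b_k-a_i-a_j$. A point with non-closed orbit lies in $G_\Phi\cdot\mathcal{V}_\la^{\ge0}$ for some such $\la$, where $\mathcal{V}_\la^{\ge0}$ is the span of the nonnegative-weight coordinates; since $\mathcal{V}_\la^{\ge0}$ is invariant under the parabolic $P_\la$, one has $\dim(G_\Phi\cdot\mathcal{V}_\la^{\ge0})\le\dim(G_\Phi/P_\la)+\dim\mathcal{V}_\la^{\ge0}$. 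As the relevant subspaces $\mathcal{V}_\la^{\ge0}$ fall, up to conjugacy, into finitely many classes indexed by ordered splittings of $\br^q$ and of $\br^p$, it suffices to verify $\dim(G_\Phi/P_\la)+\dim\mathcal{V}_\la^{\ge0}<\dim\mathcal{V}(p,q)$ for each of them.

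The main obstacle is exactly this last family of estimates: one must maximize $\dim(G_\Phi/P_\la)+\dim\mathcal{V}_\la^{\ge0}$ over all weight vectors and check the strict inequality throughout $q\ge6,\ 2<p<D-2$ and for $(5,5)$. The maximum is attained on the coarsest splittings, where the estimate collapses to essentially the inequality $q^2+p^2-1<pD$ of part (i) — which is why the same numerical boundary governs both parts — and the tight configurations $(5,5)$ and $p=3,\,p=D-3$ at $q=6$ must be inspected by hand. The same computation explains the excluded ranges: for small $p$ (or small $q$) some splitting violates the strict inequality, so a generic algebra of type $(p,q)$ fails to be a polystable point and hence, typically, is not an Einstein nilradical of eigenvalue type $(1,2;q,p)$.
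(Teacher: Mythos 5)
Your part (i) is fine: the orbit-dimension count $q^2+p^2-1<pD$ with the scaling kernel, minimization at the endpoints $p=3$, $p=D-3$, and the measure-zero argument for a continuum of classes is exactly the argument of \cite{Eb1} that the paper cites for this part. Your reduction of part (ii) is also on target: for generic $v$ the derivation algebra is generated (mod the abelian ideal $\b\to\m$) by $\diag(I_q,2I_p)$, the pre-Einstein derivation is the multiple $\tfrac{q+2p}{q+4p}\diag(I_q,2I_p)$, the group $G_\phi$ of \eqref{eq:Ggphi} becomes $\SL(q)\times\SL(p)$, and Theorem~\ref{t:var} reduces the claim to: the set of $v$ with closed $\SL(q)\times\SL(p)$-orbit contains an open dense subset of $\mathcal{V}(p,q)$. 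Up to this point you agree with the route of \cite{Ni4} (Section~5 there; see also Remark~\ref{rem:typical}).

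The gap is in your density argument, and it is twofold. First, the polystable locus of a linear action is \emph{not} open in general (the origin is polystable, while nearby null-cone points are not); what is Zariski-open is the properly stable locus (closed orbit plus finite stabilizer), and openness of the relevant set must come from there. Second, and more seriously, the family of inequalities $\dim(G_\phi/P_\la)+\dim\mathcal{V}_\la^{\ge0}<\dim\mathcal{V}(p,q)$ that your proof hinges on does not ``collapse to $q^2+p^2-1<pD$'' and need not hold at all, even when generic orbits \emph{are} closed: for the adjoint representation of $\SL(2)$ one has $\dim(G/P_\la)+\dim V_\la^{\ge0}=1+2=3=\dim V$, the sets $G\cdot V_\la^{\ge0}$ fill the whole space, and yet the generic (semisimple) orbits are closed. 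So establishing strict inequality for every stratum is a strictly stronger statement than the Proposition, it is left entirely unverified in your proposal, and its failure at a single $\la$ would leave you with no conclusion. The cited proofs avoid this by the opposite, constructive strategy: one exhibits, for each admissible $(p,q)$, a single explicit tuple $(J_1,\ldots,J_p)$ satisfying $\sum_\alpha J_\alpha J_\alpha^t\propto I_q$ and $\Tr J_\alpha J_\beta^t\propto\delta_{\alpha\beta}$ --- precisely the nilsoliton conditions \eqref{eq:twosteEinstein} of Lemma~\ref{l:twostepnil} with $\phi$ a multiple of $\diag(I_q,2I_p)$, i.e.\ a zero of the moment map --- with discrete stabilizer; Kempf--Ness then gives a closed orbit, hence a properly stable point, and Zariski-openness of the stable locus together with irreducibility of $\mathcal{V}(p,q)$ yields open density. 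The excluded ranges (such as $(p,q)=(2,2k+1)$ and its dual, cf.\ Remark~\ref{rem:typical}) are exactly where such a stable point fails to exist, not where a strata inequality first becomes an equality, so your claimed explanation of the boundary cases is also not substantiated.
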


Note that the sets of exceptional pairs $(p,q)$ in \cite{Eb3} is smaller then that in \cite{Ni4}. The difference is
explained in Remark~\ref{rem:typical} in Section~\ref{ss:2step}.

Proposition~\ref{p:twostepopen} misses two-step nilpotent Lie algebras $\n$ with the following $(p,q)$:
\begin{itemize}
    \item $p=1$. Any such $\n$ is the direct sum of a Heisenberg algebra and an abelian ideal and is an
    Einstein nilradical (the corresponding solvmanifold can be taken as the product of the real and the complex
    hyperbolic space).
    \item $p=D$, the free two-step nilpotent algebra; $\n$ is an Einstein nilradical by
    \cite[Proposition~2.9]{GK}.
    \item $p=D-1$; any such algebra is an Einstein nilradical by Proposition~\ref{p:p=D-1} in Section~\ref{s:dual}.
    \item $q \le 5, \; (p,q) \ne (5,5)$. The classification of these algebras is given in \cite{GT}. All of them,
    apart from few exceptions, are Einstein nilradicals \cite[Proposition~2]{Ni4}.
    \item $p=2, \; p=D-2$. The classification of Einstein nilradicals with $p=2$ is given in Theorem~\ref{t:twostep}.
    The case $p=D-2$ can be studied along the similar lines, but may be very computationally involved.
\end{itemize}

The main result of the paper is the classification of two-step Einstein nilradicals of type $(2,q)$ given in
Theorem~\ref{t:twostep} below. Our interest is motivated by two facts: firstly, the type  $(2,q)$ is one of the
exceptions in Proposition~\ref{p:twostepopen}, and secondly,
this class of nilpotent Lie algebras is one of the rare classes, for which an explicit
classification is at all possible (see Section~\ref{ss:2step} for details).

A two-step nilpotent Lie algebra $\n$ of type $(2,q)$ is defined by a point in the
space $\mathcal{V}(2,q)$ of pairs $(J_1, J_2)$ of skew-symmetric $q \times q$-matrices. The
$\GL(q)$-inva\-riants of such a pair are the elementary divisors and the minimal indices of the pencil
$xJ_1+yJ_2, \; x, y \in \br$.
We use the \emph{reduced} elementary divisors obtained by destroying all the
elementary divisors of the form $y^l$ and by taking every elementary divisor half of the times it appears (as the
pencil is skew-symmetric, its elementary divisors come in duplicate;  see Section~\ref{ss:notation} for details).
We say that a pencil is \emph{subsingular}, if all its elementary divisors are of the form
$(x+a_iy)^{l_i},\; i=1, \ldots, u$ (no complex elementary divisors), and the set $\{a_1, \ldots, a_u\}$ consists
of at most two elements. In the subsingular
case, relabel the elementary divisors in such a way
that $a_1=\ldots=a_{u'} \ne a_{u'+1}=\ldots=a_{u},$ $u=u'+u'', \; u,u',u''\ge 0$, and that
\begin{equation}\label{eq:lex}
(\max\nolimits_{1 \le i \le u'}l_i, u') \ge (\max\nolimits_{1 \le i \le u''}l_{u'+i}, u'')
\end{equation}
in the lexicographic order (where the maximum over the empty set is defined to be zero).

\begin{theorem}\label{t:twostep}
Let a two-step nilpotent Lie algebra $\n$ of type $(2,q)$ be defined by a pencil $xJ_1+yJ_2$ whose reduced
elementary divisors are $(x+a_iy)^{l_i}, \; 1 \le i \le u$, and
$((x+\mu_jy)^2+(\nu_jy)^2)^{n_j}, \; \nu_j \ne 0,\; 1 \le j \le w$, and whose minimal indices are $k_1, \ldots, k_v$,
where $u,w, v \ge 0$. Consider two cases:

\begin{enumerate} [\rm (A)]
    \item
    Generic case: either $w > 0$ or $\#\{a_1, \ldots, a_u\} \ge 3$.

    The algebra $\n$ is an Einstein nilradical if and only if
    \begin{enumerate}[\rm(i)]
        \item
        $l_i=n_j=1$, for all $i= 1, \ldots, u, \; j = 1, \ldots, w$, and
        \item
        for every $x \in \br, \quad \#\{i: a_i = x\} <  \frac12 u + w$.
    \end{enumerate}

    \item
    Subsingular case: $w = 0$ and $\#\{a_1, \ldots, a_u\} \le 2$. Assume that the $a_i$'s are labelled according
      to \eqref{eq:lex}.
    The algebra $\n$ is an Einstein nilradical if and only if $l_i=1$, for all $i=u'+1, \ldots, u$, and one of the
    following conditions holds:
    \begin{enumerate}[\rm(a)]
    \item
    $S_1=0$ and $l_i=1$, for all $i=1, \ldots, u'$;
    \item
    $S_1>0$ and $2k_j^2, \left[\tfrac12 (l_i^2+1)\right] < 2S_1^{-1}S_2$, for all $j=1, \ldots, v$ and all
    $i=1, \ldots, u'$,
    \end{enumerate}
    where
\begin{equation}\label{eq:asc}
    S_1= \sum\nolimits_{i=1}^{u'} l_i - u'', \quad
    S_2=1+\sum\nolimits_{i=1}^{u'} \tfrac16 (2l_i^3+l_i)+\sum\nolimits_{j=1}^v \tfrac16 k_j(k_j+1)(2k_j+1).
\end{equation}

\end{enumerate}

\end{theorem}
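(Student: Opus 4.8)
The plan is to apply the variational characterization of Einstein nilradicals developed in \cite{Ni4} (based on \cite[Theorem~6.15]{Heb}): having computed the \emph{pre-Einstein derivation} $\phi$ of $\n$ and the associated reductive group $G_\phi \subset \SL(q+2)$, the algebra $\n$ is an Einstein nilradical if and only if the scalar curvature functional $sc$, restricted to the $G_\phi$-orbit of a base inner product, attains a critical point; equivalently, the Lie bracket $\mu$ determined by $(J_1,J_2)$ must be a minimal vector for the $G_\phi$-action, so that its orbit is closed and contains a zero of the moment map. First I would compute $\phi$. Since $\z=[\n,\n]$ is the two-dimensional center and $\m$ a complement of dimension $q$, one expects $\phi$ to act as $\id$ on $\m$ and as $2\,\id$ on $\z$ whenever the pencil has no nilpotent blocks (eigenvalue type $(1,2;q,2)$); as the reduced form already discards the divisors $y^l$, the computation of $\phi$ should come out uniform across both cases.

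Next I would translate the minimal-vector condition into the language of the Kronecker invariants. The point is that $G_\phi$ is assembled from $\GL(q)$ acting by congruence on the pencil together with the $\GL(2)$ acting on the span of $J_1,J_2$; restricting to a maximal torus compatible with the decomposition of $\br^q$ into the Kronecker and Jordan blocks reduces $sc|_{G_\phi}$ to an explicit function of the torus parameters. Its critical-point and coercivity behaviour is governed by the convex hull of the \emph{weights} read off from the elementary divisors $(x+a_iy)^{l_i}$, the complex divisors $((x+\mu_jy)^2+(\nu_jy)^2)^{n_j}$, and the minimal indices $k_j$. The moment-map condition then becomes the classical requirement that $0$ lie in the relative interior of this weight polytope, which I expect to unwind precisely into conditions (i)--(ii) of case (A) and the inequalities of case (B).

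The dichotomy reflects the dimension of the effective torus, i.e. the size of $G_\phi$. In the generic case a complex divisor, or three distinct parameters $a_i$, forces the $\GL(2)$-part to act rigidly, so the balancing splits into two local demands: each block must be \emph{semisimple}, since a single high-multiplicity divisor ($l_i>1$ or $n_j>1$) produces a one-parameter degeneration destroying closedness of the orbit, which yields the necessity of (i); and no value $x$ may be shared by too many of the $a_i$, the balance $\#\{i:a_i=x\}<\tfrac12 u+w$ being exactly what places $0$ inside the polytope, which yields (ii). In the subsingular case there are at most two distinct $a_i$, so an extra one-parameter subgroup survives, namely the rescaling between the two eigenvalue groups and the singular part, and the condition becomes genuinely quantitative: the quantities $S_1$ and $S_2$ in \eqref{eq:asc} are a first and a second moment of the weight distribution (a signed count of block multiplicities, and a sum of squares of the block weights), and the inequalities $2k_j^2,\ [\tfrac12(l_i^2+1)]<2S_1^{-1}S_2$ express that every extremal weight squared stays below the weighted average $2S_1^{-1}S_2$, which is precisely the minimal-vector condition for the surviving torus; the borderline $S_1=0$ degenerates this torus and is treated separately by condition (a).

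I expect the main obstacle to be the subsingular case (B). There the enlarged symmetry makes the stability analysis honestly multidimensional, and one must determine $G_\phi$ exactly, in particular whether automorphisms mixing equal-$l_i$ blocks enlarge it, and then prove both directions sharply at the computed thresholds. Necessity requires exhibiting explicit destabilizing one-parameter subgroups whenever $l_i>1$ for $i>u'$, or whenever one of the inequalities fails, realized as concrete degenerations of the pencil; sufficiency requires showing that $sc|_{G_\phi}$ is bounded below and coercive by a convexity (Kempf--Ness type) argument, and then invoking uniqueness of the critical point. Pinning down the exact constants $2S_1^{-1}S_2$ and the floor $[\tfrac12(l_i^2+1)]$, rather than cruder bounds, is the delicate part, and is where the cubic sums in \eqref{eq:asc} must be evaluated exactly instead of merely estimated.
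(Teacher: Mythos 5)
Your overall frame---compute the pre-Einstein derivation, form $G_\phi$, and test closedness of the orbit via Theorem~\ref{t:var}---is indeed the paper's starting point, but two of your structural claims are wrong and would derail the argument. First, the pre-Einstein derivation is \emph{not} uniform across the cases: when minimal indices are present ($v>0$), Lemma~\ref{l:precase1} shows that $\phi$ acquires the eigenvalues $1+\sigma\pm(2k_j+1)^{-1}$ on the singular blocks (see \eqref{eq:precase1}), so it is not of eigenvalue type $(1,2;q,2)$ even when there are no nilpotent blocks; and in the subsingular case the maximal torus is strictly bigger and $\phi$ is substantially different (cf.\ Remark~\ref{rem:eigen1}), contradicting your asserted uniformity. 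Second, your reduction of the stability analysis to ``$0$ in the relative interior of the weight polytope'' of a single fixed maximal torus is not a valid criterion here, because $G_\phi$ contains a nonabelian $\SL(2)$ factor acting on the center: the quantifier ``for every $x\in\br$'' in condition (ii) ranges over all conjugates of the torus, i.e.\ over all geodesic rays of $\SL(2)/\SO(2)$, and a fixed-torus polytope cannot detect it. The paper implements this correctly by showing (Lemmas~\ref{l:Fh} and~\ref{l:Phiconvex}) that the nilsoliton condition is equivalent to a critical point of an explicit function $F$ on $\SL(2)$ that descends to a positive, geodesically convex function $\Phi$ on the hyperbolic plane, with condition (ii) being exactly the coercivity of $\Phi$ along every geodesic ray; necessity is then obtained from explicit degenerations $g(t)\subset G_\phi$, as you correctly anticipate, but whose membership in $G_\phi$ must be checked against the \emph{actual} $\phi$, not the canonical grading derivation.

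There are two further concrete gaps. Your generic case silently assumes that $w>0$ makes the $\GL(2)$-part act rigidly, but this fails when all complex elementary divisors share a single conjugate pair $\mu\pm\mathrm{i}\nu$: then the condition on the third line of \eqref{eq:can3} is violated, the quadratic $f$ in the proof of Lemma~\ref{l:precase1} need not vanish, and the torus is larger. The paper isolates this as Case~3 and treats it by complexification, invoking \cite[Theorem~6]{Ni4} to reduce to the subsingular computation---a step entirely absent from your plan, and without which your convexity argument proves the wrong necessity statement there. Finally, in case (B) you interpret $S_1,S_2$ as moments of a weight distribution and the inequalities as a stability threshold, but you supply no mechanism producing the exact constants $2S_1^{-1}S_2$ and $\left[\tfrac12(l_i^2+1)\right]$. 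The paper obtains them from the nice-basis criterion (Theorem~\ref{t:nice}): the unique solution $\a$ of $YY^t\a=[1]_m$ is written in closed form in \eqref{eq:pieces}--\eqref{eq:lambdas}, and conditions (a), (b) are precisely the positivity of its entries (with the labelling \eqref{eq:lex} used to rule out the simultaneous positivity of the $(V^{1i})_t$ and $(U^{2i})_t$). Your proposed route of a direct Kempf--Ness analysis for the surviving torus might in principle succeed, but as written it establishes neither direction at the threshold, which is where the entire content of case (B) lies.
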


Informally, Theorem~\ref{t:twostep} says that in the generic case, an algebra $\n$ is an Einstein nilradical,
if it is defined by a matrix pencil having no nilpotent blocks in the canonical form and no elementary divisors of a
very high multiplicity.

\bigskip

The paper is organized as follows. Section~\ref{s:facts} gives the background on Einstein solvmanifolds and on
two-step nilpotent Lie algebras. Theorem~\ref{t:twostep} is proved in Section~\ref{s:pf}.
In Section~\ref{s:dual}, we discuss the connection between the property to be
an Einstein nilradical and the duality, and prove Proposition~\ref{p:p=D-1}.

\section{Preliminaries}
\label{s:facts}

\subsection{Einstein solvmanifolds}
\label{ss:es}

Let $G$ be a Lie group with a left-invariant metric $Q$
obtained by the left translations from an inner product $\ip$ on the Lie algebra $\g$ of $G$.
Let $B$ be the Killing form of $\g$, and let $H \in \g$ be the \emph{mean curvature vector} defined by
$\<H, X\> = \Tr \ad_X$.

The Ricci curvature $\mathrm{ric}$ of the metric Lie group $(G,Q)$ at the identity is given by
\begin{equation*}
    \mathrm{ric}(X)=-\<[H,X],X\>-\tfrac12 B(X,X)-\tfrac12 \sum\nolimits_i \|[X,E_i]\|^2
    +\tfrac14 \sum\nolimits_{i,j} \<[E_i,E_j],X\>^2,
\end{equation*}
for $X \in \g$, where $\{E_i\}$ is an orthonormal basis for $(\g, \ip)$
(\cite[Eq.~(2.3)]{Heb} or \cite[Eq.~(7.38)]{Bes}).

If $(\n, \ip)$ is a nilpotent metric Lie algebra, then $H = 0$ and $B = 0$, so
the Ricci operator $\ric$ of the metric Lie algebra $(\g, \ip)$ (the symmetric operator associated
to $\mathrm{ric}$) has the form
\begin{equation}\label{eq:riccinilexplicit}
\<\ric X, Y \> = \tfrac14 \sum\nolimits_{i,j} \<X, [E_i, E_j]\> \<Y, [E_i, E_j]\> -
\tfrac12 \sum\nolimits_{i,j} \<[X, E_i], E_j\> \<[Y, E_i], E_j]\>.
\end{equation}

By the result of \cite{La5}, any Einstein metric solvable Lie algebra is \emph{standard}, which means that
the orthogonal complement to the derived algebra $[\g, \g]$ is abelian.

It is proved in \cite{AK} that any Ricci-flat metric solvable Lie algebra is flat. By \cite{DM},
any Einstein metric solvable unimodular Lie algebra is also flat. We will therefore always assume $\g$
to be nonunimodular ($H \ne 0$), with an inner product of a strictly negative scalar curvature.

Any Einstein metric solvable Lie algebra admits a rank-one reduction \cite[Theorem 4.18]{Heb}. This means that if
$(\g, \ip)$ is such an algebra, with the nilradical $\n$ and the mean curvature vector $H$, then the
subalgebra $\g_1 = \mathbb{R}H \oplus \n$, with the induced inner product, is also Einstein. What is
more, the derivation $\Phi=\ad_{H|\n}:\n \to \n$ is symmetric with respect to the inner product, and all its
eigenvalues belong to $\gamma \mathbb{N}$ for some $\gamma > 0$. This implies, in particular, that the nilradical $\n$
of an Einstein metric solvable Lie algebra admits an $\mathbb{N}$-gradation defined by the eigenspaces of $\Phi$.
As proved in \cite[Theorem~3.7]{La1}, a necessary and sufficient condition for a metric nilpotent algebra
$(\n, \ip)$ to be the nilradical of an Einstein metric solvable Lie algebra is
\begin{equation}\label{eq:ricn}
    \ric = C \id_\n + \Phi, \quad \text{for some $\Phi \in \Der(\n)$},
\end{equation}
where $C \dim \g < 0$ is the scalar curvature of $(\g, \ip)$. This equation, in fact, defines $(\g, \ip)$, as explained
in Section~\ref{s:intro}. By \cite[Theorem 3.5]{La1}, a nilpotent Lie algebra admits no more than one
nil\-soli\-ton metric, up to conjugation  by $\Aut(\n)$ and scaling (and hence, an Einstein derivation, if it exists, is
unique, up to conjugation and scaling).
The set of eigenvalues $\la_i$ and their multiplicities $d_i$ of the Einstein derivation $\Phi$ of an Einstein nilradical
$\n$ is called the \emph{eigenvalue type} of $\n$ (and of $\Phi$). The eigenvalue type is usually written as
$(\la_1, \ldots, \la_p \, ; \, d_1, \ldots, d_p)$ (note that the $\la_i$'s are defined up to positive multiple).
Equation \eqref{eq:ricn}, together with \eqref{eq:riccinilexplicit}, implies that if $\n$ is an Einstein
nilradical, with $\Phi$ the Einstein derivation, then for some
$C <0, \quad \Tr (\Phi \, \psi) = - C \Tr \psi$, for any $\psi \in \Der(\n)$.
This motivates the following definition:

\begin{definition} \label{d:pE}
A derivation $\phi$ of a Lie algebra $\n$ is called \emph{pre-Einstein}, if it is semisimple, with all the eigenvalues
real, and
\begin{equation}\label{eq:pEtrace}
    \Tr (\phi \psi) = \Tr \psi,  \quad \text{for any $\psi \in \Der(\n)$}.
\end{equation}
\end{definition}

In practice, a pre-Einstein derivation can be found by solving a system of linear equations.
The usefulness of the pre-Einstein derivation for the study of Einstein nilradicals follows from the two
theorems below \cite[Theorems~1, 2]{Ni4}.

\begin{theorem} \label{t:preE}
Every nilpotent Lie algebra $\n$ admits a pre-Einstein derivation $\phi$, unique up to an automorphism of $\n$.
If $\n$ is an Einstein nilradical, its Einstein derivation is a positive multiple of $\phi$.
\end{theorem}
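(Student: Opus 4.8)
The plan is to realise the defining relation \eqref{eq:pEtrace} as a pairing equation for the trace form on $\Der(\n)$ and to solve it using the structure of $\Der(\n)$ as an algebraic Lie algebra. Write $\langle A,B\rangle_0=\Tr(AB)$ for the (nondegenerate) trace form on $\gl(\n)$ and $L(\psi)=\Tr\psi$ for the linear functional on $\Der(\n)$. Condition \eqref{eq:pEtrace} says exactly that $\langle\phi,\psi\rangle_0=L(\psi)$ for all $\psi\in\Der(\n)$, i.e. $\langle\phi-\id_\n,\psi\rangle_0=0$ on $\Der(\n)$. Since $\Der(\n)$ is the Lie algebra of the algebraic group $\Aut(\n)$, hence algebraic, I would fix a maximal fully reducible (reductive) subalgebra $\l$ and the unipotent radical $\mathfrak{u}$ so that $\Der(\n)=\l\ltimes\mathfrak{u}$, and further split $\l=\s\oplus\z(\l)$ into its semisimple part $\s=[\l,\l]$ and its centre $\z(\l)$, the latter being an algebraic torus $\z(\l)=\z_s\oplus\z_c$ whose summands act on $\n$ with real, respectively purely imaginary, eigenvalues.

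\emph{Existence.} The first observation is that $L$ is supported only on $\z_s$: elements of $\mathfrak{u}$ are nilpotent, elements of $\s$ are sums of commutators, and elements of $\z_c$ have purely imaginary eigenvalues occurring in conjugate pairs, so $L$ vanishes on each of these three summands. On $\z_s$ the trace form is positive definite, since a nonzero $\phi\in\z_s$ is diagonalizable over $\br$ and $\Tr(\phi^2)=\sum_i d_i\la_i^2>0$; hence there is a unique $\phi\in\z_s$ with $\langle\phi,\psi\rangle_0=L(\psi)$ for all $\psi\in\z_s$, and this $\phi$ is semisimple with real eigenvalues by construction. It remains to check $\langle\phi,\psi\rangle_0=0=L(\psi)$ on the other three summands. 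For $\psi\in\s$ this follows from Schur's lemma ($\phi$ acts as a scalar on each isotypic component, on which $\psi$ is traceless); for $\psi\in\z_c$ the operator $\phi\psi$ is real with purely imaginary spectrum, hence traceless; and for $\psi\in\mathfrak{u}$, decomposing into $\ad_\phi$-weight spaces, one uses $\Tr(\phi\psi)=\beta^{-1}\Tr(\phi[\phi,\psi])=0$ on the weight-$\beta\ne0$ part (cyclicity of the trace) and the fact that on the zero-weight part $\phi$ and $\psi$ commute, so $\phi\psi$ is nilpotent. This establishes \eqref{eq:pEtrace} for all $\psi\in\Der(\n)$.

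\emph{Uniqueness.} The key input here is Mostow's theorem: any two maximal fully reducible subalgebras of $\Der(\n)$ are conjugate by an element of $\exp\mathfrak{u}\subset\Aut(\n)^0$. I would first show that every pre-Einstein derivation $\phi'$ lies in the split centre of some such $\l'$: being semisimple with real eigenvalues it sits in a maximal reductive $\l'$, and writing $\phi'=z'+s'$ with $z'\in\z(\l')$, $s'\in\s'$, pairing \eqref{eq:pEtrace} against $\psi\in\s'$ and using $L|_{\s'}=0$ together with the $\langle\cdot,\cdot\rangle_0$-orthogonality of $z'$ to $\s'$ forces $s'=0$ by nondegeneracy of the trace form on the (faithfully represented) semisimple part; reality of the eigenvalues then gives $\phi'\in\z_s(\l')$. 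Because $L$ and $\langle\cdot,\cdot\rangle_0$ are $\Aut(\n)$-invariant, the canonical representative of $\z_s(\l')$ transported by the Mostow conjugation coincides with the one in $\z_s(\l)$, so all pre-Einstein derivations form a single $\Aut(\n)$-orbit.

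Finally, suppose $\n$ is an Einstein nilradical with Einstein derivation $\Phi$, so $\ric=C\id_\n+\Phi$ with $C<0$. As recorded in Section~\ref{s:facts}, this yields $\Tr(\Phi\psi)=-C\,\Tr\psi$ for all $\psi\in\Der(\n)$, i.e. $(-C)^{-1}\Phi$ satisfies \eqref{eq:pEtrace}; moreover $\Phi$ is symmetric, hence semisimple with real eigenvalues, and so is $(-C)^{-1}\Phi$. Thus $(-C)^{-1}\Phi$ is a pre-Einstein derivation, and by the uniqueness just proved it agrees with $\phi$ up to an automorphism of $\n$; since $-C>0$, the Einstein derivation is a positive multiple of $\phi$. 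I expect the main obstacle to be the uniqueness step—controlling the choice of maximal reductive subalgebra and confirming that a pre-Einstein derivation is forced into its split centre—rather than the essentially linear-algebraic existence argument.
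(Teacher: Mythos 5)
Your proof is correct and takes essentially the same route as the source: this paper states Theorem~\ref{t:preE} without proof, citing \cite{Ni4}, and the argument there is precisely the one you give --- a Levi--Mostow decomposition of the algebraic Lie algebra $\Der(\n)$, existence by solving $\Tr(\phi\psi)=\Tr\psi$ on the split part of the central torus (where the trace form is positive definite, the functional $\psi\mapsto\Tr\psi$ vanishing on the semisimple, compact-toral and nilpotent pieces), and uniqueness via Mostow's conjugacy theorem; this is also exactly how the present paper uses the result (cf.\ the proof of Lemma~\ref{l:precase1}: every maximal $\br$-torus contains a unique pre-Einstein derivation, and \eqref{eq:pEtrace} need only be tested against $\psi\in\t$). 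The only blemish is your Schur-lemma phrasing of $\Tr(\phi\psi)=0$ for $\psi\in\s$ (over $\br$, a central $\phi$ need not act as a scalar on isotypic components, since the commutant may be $\bc$ or the quaternions), but this is repaired in one line using $\s=[\s,\s]$ and $\Tr(\phi[\psi_1,\psi_2])=\Tr([\phi,\psi_1]\psi_2)=0$.
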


Choose and fix a pre-Einstein derivation $\phi$ of $\n$, and define the subalgebra
$\g_\phi = \{A \in \sl(\n) : [A, \phi] = 0,$ $\Tr(A \, \phi)=0\}$.
Let $G_\phi \subset \SL(n)$ be the connected Lie group with the Lie algebra $\g_\phi$.
The group $G_\phi$ is constructed as follows. Let $\n_j$ be the eigenspaces of $\phi$,
with the corresponding eigenvalues $\la_j, \; j=1, \ldots, p$. By \cite[Theorem~1]{Ni4}, all the 
$\la_j$'s are rational. Denote $\Lambda_j= N\la_j$, where
$N$ is the least common multiple of the denominators of the $\la_j$'s. Then $G_\phi$ is the
the subgroup of $\prod_{j=1}^p \GL(\n_j) \subset \GL(n)$ defined by
\begin{equation}\label{eq:Ggphi}
G_\phi=\{(g_1, \ldots, g_p) \, : \, g_j \in \GL^+(\n_j), \,
\prod\nolimits_{j=1}^p \det g_j = \prod\nolimits_{j=1}^p (\det g_j)^{\Lambda_j} = 1\},
\end{equation}
where $\GL^+(V)= \{g \in \GL(V) : \det g > 0 \}$.
As all the $\Lambda_j$'s are integers (although some could be zero or negative), $G_\phi$ is a real algebraic
reductive Lie group, with the Lie algebra $\g_\phi$.

Denote $\mathcal{V} = \wedge^2 (\Rn)^* \otimes \Rn$ the space of skew-symmetric bilinear
maps on~$\Rn$. Let $\mu$ be an element of $\mathcal{V}$ defining a nilpotent Lie algebra $\n= (\Rn, \mu)$.
Define the action of $G_\phi$ on the linear space $\mathcal{V}$
by $g.\nu(X,Y)=g\nu(g^{-1}X,g^{-1}Y)$ for $\nu \in \mathcal{V}, \, g \in G_\phi$. The following theorem from\
\cite{Ni4} gives a variational characterization of Einstein nilradicals:

\begin{theorem} \label{t:var}
A nilpotent Lie algebra $\n=(\Rn, \mu)$ with a pre-Einstein derivation $\phi$ is an Einstein nilradical if and only
if the orbit $G_\phi.\mu \subset \mathcal{V}$ is closed.
\end{theorem}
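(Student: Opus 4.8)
The plan is to read the statement as an instance of the Kempf--Ness theory for the real reductive group $G_\phi$ and to match the vanishing of the corresponding moment map with the nilsoliton equation \eqref{eq:ricn}. After a global change of basis I may assume that $\phi$ is symmetric with respect to the standard inner product on $\Rn$, so that its eigenspaces $\n_j$ are mutually orthogonal; this preserves the isomorphism type of $\n$ and conjugates $G_\phi$, hence does not affect closedness of the orbit. Since $\phi$ is a derivation of $\mu$, the infinitesimal action yields $\phi.\mu=0$, and as every $g\in G_\phi$ commutes with $\phi$ the same holds along the whole orbit: $\phi.\nu=0$ for all $\nu\in G_\phi.\mu$. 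Let $\p_\phi\subset\g_\phi$ be the subspace of symmetric endomorphisms, and let $m\colon\mathcal V\setminus\{0\}\to\p_\phi$ be the moment map of the $G_\phi$-action, determined by $\<m(\nu),A\>=\|\nu\|^{-2}\<A.\nu,\nu\>$ for $A\in\p_\phi$. Because $G_\phi$ is reductive (as noted after \eqref{eq:Ggphi}), the real Kempf--Ness theorem applies: $G_\phi.\mu$ is closed if and only if it contains a zero of $m$, equivalently a critical point of $g\mapsto\|g.\mu\|^2$. The whole problem thus reduces to identifying the zeros of $m$ in $G_\phi.\mu$ with nilsoliton brackets.

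This identification rests on two computations. First, a direct manipulation of \eqref{eq:riccinilexplicit} shows that $\<A.\nu,\nu\>=c_0\Tr(A\,\ric_\nu)$ for every symmetric $A$, with a universal positive constant $c_0$; hence for $A\in\p_\phi$ we obtain $\<m(\nu),A\>=c_0\|\nu\|^{-2}\Tr(A\,\ric_\nu)$, so that $m(\nu)$ is, up to a positive factor, the orthogonal projection of $\ric_\nu$ onto $\p_\phi$. Second, $\ric_\nu$ commutes with $\phi$ whenever $\phi.\nu=0$: taking $X\in\n_a$, $Y\in\n_b$ with $\la_a\ne\la_b$ and inserting the orthonormal basis adapted to the $\n_j$ into \eqref{eq:riccinilexplicit}, the nonzero terms of the first sum force $\la_a=\la_b$ through $[\n_s,\n_t]\subset\n_{s+t}$, and likewise for the second sum, whence $\<\ric_\nu X,Y\>=0$. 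Thus $\ric_\nu$ lies in the centralizer of $\phi$ among symmetric endomorphisms, which splits orthogonally as $\p_\phi\oplus\Span\{\id,\phi\}$ --- the splitting being orthogonal exactly because of the trace conditions $\Tr A=\Tr(A\phi)=0$ defining $\g_\phi$. Combining the two facts, for $\nu\in G_\phi.\mu$ one has the chain $m(\nu)=0$ $\iff$ the $\p_\phi$-component of $\ric_\nu$ vanishes $\iff$ $\ric_\nu\in\Span\{\id,\phi\}$ $\iff$ $\ric_\nu=C\id+c\phi$ with $c\phi\in\Der(\nu)$, i.e.\ $(\Rn,\nu)$ satisfies \eqref{eq:ricn} and $\nu$ is a nilsoliton.

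It remains to run both implications. If $G_\phi.\mu$ is closed, Kempf--Ness provides a zero $\nu$ of $m$ in the orbit, which is a nilsoliton by the previous paragraph, so $\n\cong(\Rn,\nu)$ is an Einstein nilradical. Conversely, let $\n$ be an Einstein nilradical, so that $g_*.\mu$ is a nilsoliton for the standard inner product for some $g_*\in\GL(n)$, with $\ric_{g_*.\mu}=C\id+\Phi$ and $\Phi\in\Der(g_*.\mu)$. Here $\Phi=\ric_{g_*.\mu}-C\id$ is symmetric, and by Theorem~\ref{t:preE} it is a positive multiple of the pre-Einstein derivation $g_*\phi g_*^{-1}$ of $g_*.\mu$; hence $g_*\phi g_*^{-1}$ is symmetric. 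Replacing $g_*$ by the positive-definite factor of its polar decomposition (the orthogonal factor preserves the nilsoliton property), the symmetry of $g_*\phi g_*^{-1}$ forces $g_*^2$, and therefore $g_*$, to commute with $\phi$; so $g_*\in\prod_j\GL^+(\n_j)$ and $\ric_{g_*.\mu}=C\id+c\phi$, that is $m(g_*.\mu)=0$. Finally, writing $\prod_j\GL^+(\n_j)=\br_{>0}\id\cdot\exp(\br\phi)\cdot G_\phi$ we have $g_*=s\id\cdot\exp(t\phi)\cdot g_0$ with $g_0\in G_\phi$, $s>0$, $t\in\br$; since $\phi.\mu=0$ this gives $g_*.\mu=s^{-1}g_0.\mu$, and as $m$ is scale-invariant $m(g_0.\mu)=m(g_*.\mu)=0$ with $g_0\in G_\phi$. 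By Kempf--Ness, $G_\phi.\mu$ is closed.

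The main obstacle is the converse reduction: a nilsoliton may a priori sit anywhere in the $\GL(n)$-orbit, whereas $G_\phi$ is a proper reductive subgroup and a zero of $m$ only forces the $\p_\phi$-part of $\ric$ to vanish. What bridges the gap is the interplay of the two computations above with Theorem~\ref{t:preE}: the grading argument removes the off-diagonal part of $\ric$ automatically, Theorem~\ref{t:preE} pins the derivation to the $\phi$-direction, and the polar decomposition confines the residual $\GL(n)$-freedom to the harmless central factor $\br_{>0}\id\times\exp(\br\phi)$. I expect the careful handling of this factor, and the verification of the Ricci--moment-map identity of the second paragraph with its precise constant, to be where the actual work lies; the GIT input is standard once $G_\phi$ is known to be reductive.
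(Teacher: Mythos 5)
Your proposal is correct and follows essentially the same route as the paper's source: Theorem~\ref{t:var} is stated here without proof, quoted from \cite{Ni4}, and the argument given there is precisely your Kempf--Ness/moment-map scheme --- identify the $G_\phi$-moment map with the $\p_\phi$-projection of $4\|\nu\|^{-2}\ric_\nu$, use the two trace conditions defining $\g_\phi$ to realize $\p_\phi$ as the orthogonal complement of $\Span\{\id,\phi\}$ inside the symmetric centralizer of $\phi$, and in the converse direction use the polar decomposition plus the two determinant conditions in \eqref{eq:Ggphi} to push a nilsoliton point into the $G_\phi$-orbit up to the harmless factor $\br_{>0}\id\cdot\exp(\br\phi)$. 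The points you gloss over (absorbing the $\Aut(\n)$-ambiguity in Theorem~\ref{t:preE} into $g_*$, verifying $C<0$ from $\Tr\ric_\nu<0$ together with $\Tr\phi=\Tr(\phi^2)<\dim\n$ for nonabelian $\n$, and the degenerate cases where $\id$ and $\phi$ are dependent) are routine and are handled the same way in \cite{Ni4}.
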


\subsection{Two-step nilpotent Lie algebras}
\label{ss:2step}
We follow \cite{Eb2, Gau}.
A two-step nilpotent Lie algebra $\n$ of dimension $p + q$, is said to be \emph{of type} $(p,q)$, if its derived
algebra $\m = [\n, \n]$ has dimension $p$. Clearly, $\m \subset \z(\n)$, the center of $\n$, and
$1 \le p \le D:=\frac12 q (q-1)$.

Choose a subspace $\mathfrak{b}$ complementary to $\m$ in $\n$ and two bases: $\{X_i\}$ for $\mathfrak{b}$
and $\{Z_\a\}$ for $\m$.
The Lie bracket on $\n$ defines (and is defined by) a $p$-tuple of skew-symmetric $q \times q$ matrices
$J_1, \ldots, J_p$ such that $[X_i, X_j] = \sum_{\a=1}^p (J_\a)_{ij} Z_\a$. The space of such $p$-tuples is
$\mathcal{V}(p,q)=(\wedge^2 \br^q)^p$. Note that the $J_\a$'s must be linearly independent, as $\m = [\n, \n]$,
so the points of $\mathcal{V}(p,q)$ corresponding to algebras of type $(p,q)$ form a subset
$\mathcal{V}^0(p,q) \subset \mathcal{V}(p,q)$, which is the complement to a real algebraic subset. The spaces
$\mathcal{V}(p,q)$ and $\mathcal{V}^0(p,q)$ are acted upon by the group $\GL(q) \times \GL(p)$ (change of bases):
for $x = (J_1, \ldots, J_p) \in \mathcal{V}(p,q)$ and $(M, T) \in \GL(q) \times \GL(p)$,
$(M, T).x= (\tilde J_1, \ldots, \tilde J_p)$, with $\tilde J_\a=\sum_{\beta=1}^p (T^{-1})_{\beta\a} M J_\beta M^t$.
Clearly, two points of $\mathcal{V}^0(p,q)$ lying on the same $\GL(q) \times \GL(p)$-orbit define isomorphic algebras.
The converse is also true, so that the space $\mathcal{X}(p, q)$ of the isomorphism classes of two-step nilpotent
Lie algebras of type $(p, q)$ is the quotient space $\mathcal{V}^0(p,q)/(\GL(q) \times \GL(p))$.
The space $\mathcal{X}(p, q)$ is compact, but in general is non-Hausdorff.

Relative to the basis $\{X_i, Z_\a\}$ for $\n$, any $\psi \in \Der(\n)$ is represented by a matrix of the form
$\left(\begin{smallmatrix} A_1 & 0 \\U & M \end{smallmatrix}\right)$, where $U \in \Mat(p,q)$ is arbitrary
(the set of the $\left(\begin{smallmatrix} 0 & 0 \\U & 0 \end{smallmatrix}\right)$'s is an abelian ideal in $\Der(\n)$),
and the matrices $A_1 \in \Mat(q,q)$ and $M \in \Mat(p,p)$ satisfy
  \begin{equation}\label{eq:der2step}
    J_\a A_1 + A_1^t J_\a = \sum\nolimits_{\beta=1}^p M_{\a\beta} J_\beta.
  \end{equation}

Let $\n$ be a two-step nilpotent Lie algebra of type $(p,q)$, with $1 \le p < D$, defined by a point
$x = (J_1, \ldots, J_p) \in \mathcal{V}^0(p,q)$. Choose an arbitrary basis $J'_\a, \; \a= 1, \ldots, D-p$, in the
orthogonal complement to the subspace $\Span(J_1, \ldots, J_p) \subset \wedge^2 \br^q$ with respect to the inner product
$Q(K_1, K_2)= -\Tr (K_1 K_2)$ on $\wedge^2 \br^q$. The point $x'=(J_1', \ldots, J_{D-p}') \in \mathcal{V}^0(D-p,q)$
defines a two-step nilpotent Lie algebra $\n^*$ of type $(D-p,q)$, which is called the \emph{dual} to $\n$. It is
easy to check that the isomorphism class of $\n^*$ is well-defined (depends only on the isomorphism class of $\n$).

The classification of two-step nilpotent Lie algebras of type $(p,q)$, in the meaning
``an explicit description of the orbit space $\mathcal{X}(p, q)$", is known only
for a very short list of
pairs $(p,q)$:
\begin{itemize}
  \item $p=D$, the free two-step nilpotent Lie algebra;
  \item $p=1$ (and the dual case $p=D-1$), the direct sum of the Heisenberg algebra and an abelian algebra (respectively
    the dual to such an algebra);
  \item $p=2$ (and the dual case $p=D-2$), see Section~\ref{ss:notation};
  \item $q \le 5$ or $(p,q)=(3,6), (12,6)$; the group $\SL(q)\times \SL(p)$ is ``visible" (see \cite{GT}).
\end{itemize}
This list might well exhaust all the cases when the classification is at all possible.
For instance, by the result of \cite{BLS}, the classification of two-step nilpotent algebras of type $(3,q)$, with an
arbitrary $q$, is hopeless.

\bigskip

In the proof of Theorem~\ref{t:twostep}, we will use two facts from the lemma below.

\begin{lemma} \label{l:twostepnil}
Let $\n= \b \oplus \m=(\br^{p+q}, \mu)$ be a two-step nilpotent Lie algebra of type $(p,q)$ defined by
a point $x = (J_1, \ldots, J_p) \in \mathcal{V}^0(p,q)$ relative to some bases $\{X_i\}$ for
$\mathfrak{b}$ and $\{Z_\a\}$ for $\m$.
\begin{enumerate}[\rm 1.]
    \item
    Suppose that for some $C \in \br$ and some diagonal derivation
$\phi =\left(\begin{smallmatrix} \phi_1 & 0 \\0 & \phi_2 \end{smallmatrix}\right) \in \Der(\n)$,
\begin{equation}\label{eq:twosteEinstein}
    \sum\nolimits_{\a=1}^p J_\a J_\a^t= - 2 C (I_q-\phi_1), \quad
    \Tr J_\a J_\beta^t = 4 C \K_{\a\beta} (1 - (\phi_2)_{\a\a}),
\end{equation}
Then $\n$ is an Einstein nilradical, with a pre-Einstein derivation $\phi$.
Moreover, the inner product $\ip$, for which the basis $\{X_i, Z_\a\}$ is orthonormal, is nilsoliton,
and the derivation $C\phi$ is Einstein.

    \item
Let $g = \left(\begin{smallmatrix} g_1 & 0\\ 0 & g_2 \end{smallmatrix}\right) \in \GL(\n)$, where
$g_1 \in \Mat(q,q)$ and $g_2 \in \Mat(p,p)$. Then $(\br^{p+q}, g.\mu)$ is a two-step
nilpotent Lie algebra of type $(p,q)$, and is defined by $\tilde x= (\tilde J_1, \ldots, \tilde J_p)$
such that
\begin{equation}\label{eq:Jchange}
\tilde J_\a=\sum\nolimits_{\beta=1}^p (g_2)_{\a\beta} g_1^{-1t} J_\beta g_1^{-1}.
\end{equation}

\end{enumerate}

\end{lemma}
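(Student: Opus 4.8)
The plan is to handle the two parts separately; the mathematical content sits almost entirely in Part~1, while Part~2 is a change-of-variables bookkeeping. For Part~1 I would first compute the Ricci operator of the metric nilpotent Lie algebra $(\n,\ip)$ directly from \eqref{eq:riccinilexplicit}, exploiting that the only nonvanishing brackets are $[X_i,X_j]=\sum_\a (J_\a)_{ij}Z_\a$ and that $\m=[\n,\n]$ is central. Since $[E_i,E_j]\in\m$ always, the first sum in \eqref{eq:riccinilexplicit} can be nonzero only for $X,Y\in\m$, and the second sum only for $X,Y\in\b$; hence $\ric$ is block diagonal with respect to $\n=\b\oplus\m$. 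Contracting the two sums against the $J_\a$ yields $\ric|_\b=-\tfrac12\sum_\a J_\a J_\a^t$ in the basis $\{X_i\}$ and $\<\ric Z_\a,Z_\beta\>=\tfrac14\Tr(J_\a J_\beta^t)$.

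Next I would substitute the two identities \eqref{eq:twosteEinstein}. The first collapses the $\b$-block to $\ric|_\b=-\tfrac12(-2C)(I_q-\phi_1)=C(I_q-\phi_1)$, and the second collapses the $\m$-block to the diagonal operator with entries $C\K_{\a\beta}(1-(\phi_2)_{\a\a})$, i.e. $C(I_p-\phi_2)$. Together these give $\ric=C\,\id_\n+\Phi$ with the diagonal operator $\Phi=-C\phi$ (a positive multiple of $\phi$, since $C<0$); as $\phi\in\Der(\n)$ by hypothesis, so is $\Phi$. This is precisely the nilsoliton equation \eqref{eq:ricn}, so by \cite[Theorem~3.7]{La1} the inner product making $\{X_i,Z_\a\}$ orthonormal is a nilsoliton, $\n$ is an Einstein nilradical, and $\Phi=-C\phi$ is its Einstein derivation.

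It then remains to identify $\phi$ as a pre-Einstein derivation, i.e. to verify \eqref{eq:pEtrace}. For this I would use the identity $\Tr(\ric\,\psi)=0$ for every $\psi\in\Der(\n)$, which follows from \eqref{eq:riccinilexplicit} and the derivation property exactly as derived just before Definition~\ref{d:pE}. Pairing $\ric=C\,\id_\n+\Phi$ with an arbitrary $\psi\in\Der(\n)$ gives $\Tr(\Phi\psi)=-C\Tr\psi$; since $\Phi=-C\phi$ and $C\ne 0$, this reads $\Tr(\phi\psi)=\Tr\psi$, which is \eqref{eq:pEtrace}, so $\phi$ is pre-Einstein and the normalization is consistent with Theorem~\ref{t:preE}. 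For Part~2 I would write $g^{-1}=\left(\begin{smallmatrix} g_1^{-1} & 0\\ 0 & g_2^{-1}\end{smallmatrix}\right)$ and evaluate $(g.\mu)(X_i,X_j)=g\,\mu(g^{-1}X_i,g^{-1}X_j)$: expanding $g^{-1}X_i=\sum_k (g_1^{-1})_{ki}X_k$, using $\mu(X_k,X_l)=\sum_\beta (J_\beta)_{kl}Z_\beta$, and then applying $g$ via $gZ_\beta=\sum_\a (g_2)_{\a\beta}Z_\a$, the coefficient of $Z_\a$ is $(\tilde J_\a)_{ij}=\sum_\beta (g_2)_{\a\beta}(g_1^{-1t}J_\beta g_1^{-1})_{ij}$, which is \eqref{eq:Jchange}. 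The $\tilde J_\a$ are skew-symmetric (conjugates of skew-symmetric matrices) and linearly independent ($g_2$ is invertible), so $\tilde x\in\mathcal{V}^0(p,q)$; moreover $(\br^{p+q},g.\mu)$ is again two-step nilpotent of type $(p,q)$ because $g$ is a linear isomorphism respecting the splitting $\b\oplus\m$.

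The only real work, and the only place where errors are likely, is the Ricci computation of Part~1: getting the block structure right and tracking the $-\tfrac12$ and $+\tfrac14$ factors so that the two contractions match the two equations in \eqref{eq:twosteEinstein} exactly. Everything else, including Part~2, is a routine index manipulation once those signs and factors are in place.
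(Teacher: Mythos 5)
Your proposal is correct and follows essentially the same route as the paper: the paper's proof likewise reads off the block form $\ric_{|\b}=-\tfrac12\sum_\a J_\a J_\a^t$, $(\ric_{|\m})_{\a\beta}=\tfrac14\Tr J_\a J_\beta^t$ from \eqref{eq:riccinilexplicit}, substitutes \eqref{eq:twosteEinstein} to get the nilsoliton equation \eqref{eq:ricn} with $\Phi=-C\phi$, and dismisses Part~2 as immediate from the definition of $g.\mu$. Your only additions --- the explicit trace-identity check that $\phi$ satisfies \eqref{eq:pEtrace} (which the paper leaves to the general discussion preceding Definition~\ref{d:pE} and Theorem~\ref{t:preE}) and the written-out index computation for \eqref{eq:Jchange} --- are sound and consistent with the paper.
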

\begin{proof}
1. As it follows from \eqref{eq:riccinilexplicit}, the Ricci tensor of $(\n, \ip)$ is given by
\begin{equation*}
    \ric_{|\b} = -\tfrac12 \sum\nolimits_{\a=1}^p J_\a J_\a^t, \quad
    (\ric_{|\m})_{\a\beta} = \tfrac14 \Tr J_\a J_\beta^t.
\end{equation*}
If the equations \eqref{eq:twosteEinstein} are satisfied for some derivation $\phi$, then, by \eqref{eq:ricn}
(with $\Phi=-C \phi$), the inner product is nilsoliton, so $\n$ is an Einstein nilradical.

2. Directly follows from the definition of $g.\mu$.
\end{proof}

\begin{remark}\label{rem:typical}
By Proposition~\ref{p:twostepopen}, a typical two-step nilpotent Lie algebra is an Einstein nilradicals of eigenvalue
type $(1,2; q,p)$.
Although the set of exceptional pairs $(p,q)$ in \cite{Eb3} is smaller, the proof of Proposition~\ref{p:twostepopen}
given \cite[Section~5]{Ni4} shows that a generic point of $\mathcal{V}(p,q)$ defines an Einstein nilradical with the
eigenvalue type $(1,2;q,p)$ in all the cases except for $(p,q)=(2, 2k+1)$ and its dual, which is precisely the statement
of \cite[Proposition~7.9]{Eb3}
(note that by \cite[Proposition~2.9(v)]{GK}, there are no two-step Einstein nilradicals of type $(1,2;2k+1,2)$ at all ).
In Proposition~\ref{p:twostepopen}, we narrow the dimension range
to exclude those cases when some algebras of type $(p,q)$ have open orbits in $\mathcal{V}(p,q)$ (and additionally,
the cases  $(p,q)=(2, 2k),\; k >3$). The remaining cases give a reasonable notion of being
typical not only in the linear space $\mathcal{V}(p,q)$, but also in the non-Hausdorff space $\mathcal{X}(p,q)$ of
isomorphism classes.
\end{remark}

\section{Proof of Theorem~\ref{t:twostep}}
\label{s:pf}

We start with a brief sketch of the proof.

In Section~\ref{ss:notation}, we define the elementary divisors and the minimal indices and give the
canonical form of the pencil $xJ_1+yJ_2$. We split the set of the pencils $xJ_1+yJ_2$ into the three cases:
Case 2 corresponds to the subsingular case in Theorem~\ref{t:twostep}, Case 1 and Case
3 combined give the generic case.

The proof in Case 1 (Section~\ref{ss:c1}) goes as follows. First, we compute the pre-Einstein
derivation $\phi$ (Lemma~\ref{l:precase1}). The group $G_\phi$ from Theorem~\ref{t:var}
appears to be too large, so we restrict ourselves to a smaller group, which acts diagonally
on the basis of $\b$ (the one, relative to which the pencil has the canonical form \eqref{eq:can3}), and which
acts as $\SL(2)$ on $\m$. Using Lemma~\ref{l:twostepnil} we show that $\n$ is an Einstein nilradical if
a particular function $F:\SL(2) \to \br$ has a
critical point on $\SL(2)$ (Lemma~\ref{l:Fh}). We further show that the function $F$ descends to a positive
geodesically convex function $\Phi$ on the hyperbolic space $\mathbb{H} = \SL(2)/\SO(2)$
(Lemma~\ref{l:Phiconvex}). If the conditions (i,ii) in (A) of Theorem~\ref{t:twostep}
are satisfied, the function $\Phi$ tends to infinity along any geodesic ray of $\mathbb{H}$, which
guarantees the existence of a critical point (in fact, a minimum). This proves the ``if"
direction of Theorem~\ref{t:twostep} for the pencils from Case 1. The ``only if" part is proved
by constructing a curve in the group $G_\phi$, along which the Lie bracket degenerates to
a non-isomorphic bracket (so that $\n$ is not an Einstein nilradical by Theorem~\ref{t:var}).

The proof of Theorem~\ref{t:twostep} for the algebras in Case 2 (Section~\ref{ss:c2}) is based on the fact
that every such algebra $\n$
has a \emph{nice basis}. According to Theorem~\ref{t:nice}, such an $\n$ is an Einstein nilradical
if and only if a certain convex geometry condition is satisfied. In the proof, we show
that this condition is equivalent to one of the conditions (a), (b) for the subsingular case
in Theorem~\ref{t:twostep}.

The proof for the algebras from Case 3 (Section~\ref{ss:c3}) can be reduced to that for the algebras
from Case~2 by \cite[Theorem~6]{Ni4}, which says that two real nilpotent Lie algebras whose
complexifications are isomorphic are or are not Einstein nilradicals simultaneously.

\subsection{Invariants and the canonical form}
\label{ss:notation}

The classification of two-step nilpotent Lie algebras of type $(2,q)$, both over $\br$ and over $\bc$, is well-known
and is based on the Kronecker theory of matrix pencils \cite{Gau,LR}. Namely, every algebra $\n$ of type $(2,q)$
is defined by a pair of linearly independent $q \times q$ skew-symmetric matrices $J_1, J_2$. To such a pair there
corresponds a skew-symmetric pencil $xJ_1+yJ_2$, with two pencils $xJ_1+yJ_2$ and $xJ'_1+yJ'_2$ defining the same algebra
if and only if they are \emph{projectively equivalent}, that is, if and only if there exist $P, Q \in \GL(q)$
such that the pencil $P(xJ_1+yJ_2)Q$ can be obtained from the pencil $xJ'_1+yJ'_2$ by a linear change of variables $x, y$
(if the pencils are equivalent, they are, in fact, \emph{congruent}: one can take $Q=P^t$; see \cite[Theorem~5.1]{LR}).

The complete set of \emph{affine} congruence invariants (under the action of $\GL(q)$ alone), is given by the
elementary divisors and the minimal indices. To define the elementary divisors of the pencil $xJ_1+yJ_2$, one computes
the invariant polynomials $i_r=\Delta_r/\Delta_{r-1}, \; r=1, \ldots, q$, where
$\Delta_0=1$, and $\Delta_r, \; r > 0$, is the greatest common divisor of the $r \times r$ minors of the matrix
$xJ_1+yJ_2$. Each of the invariant polynomials decomposes into the product of powers of prime polynomials. These
powers are called \emph{elementary divisors}, so that every elementary divisor over $\br$ has either the form
$(ax+by)^l, \; a^2+b^2 \ne 0$, or $(ax^2+2bxy+cy^2)^l,\; ac-b^2>0$. As we are interested in the projective equivalence,
we will slightly modify the elementary divisors as follows. Firstly, we destroy all the elementary divisors of the
form $y^l$ by replacing $J_1, J_2$ by their linear combinations, and then divide each of the elementary divisors by the
coefficient of the highest power of $x$. Then all the elementary divisors will be
either of the form $(x+a_iy)^{l_i}$, or of the form $((x+\mu_iy)^2+(\nu_iy)^2)^{n_i}, \nu_i \ne 0$. Secondly,
in the list of elementary divisors, some members may repeat. Moreover, as the pencil is skew-symmetric, every
elementary divisor repeats an even number of times \cite[Lemma~6.3]{LR}.
We take each of them half of the times it repeats. The resulting set
(more precisely, the multiset) of elementary divisors is called \emph{reduced}. The elementary divisors characterize
the ``regular part" of the pencil. For the ``singular part", one introduces the sequence of positive numbers $k_j$,
the \emph{minimal indices}. They are defined as the degrees of the elements of a homogeneous basis of the free submodule
$\Ker (xJ_1+yJ_2) \subset (\br[x,y])^q$ (see \cite[Definition~1.3]{GT}). 
The minimal indices and the reduced elementary
divisors (up to the linear change of variables $x, y$) form a complete set of projective invariants of a real
skew-symmetric pencil.

To give the canonical form of the pencil $xJ_1+yJ_2$ we introduce the following notation.
For $n \ge 1$, denote $I_n$ and $0_n$ the identity and the zero $n \times n$-matrices,
denote $N_n$ the nilpotent $n \times n$-matrix with $N_{i,i+1}=1$ for $1 \le i \le n-1$ and $N_{ij}=0$ otherwise, and
denote $L_n$ (respectively $R_n$) the $n \times (n+1)$-matrix obtained from the identity
matrix $I_n$ by attaching a column of $n$ zeros at the right (respectively at the left).

For rectangular matrices $A_i, \; i=1, \ldots, m$, we denote $A_1 \oplus \ldots \oplus A_m$ or $\oplus_{i=1}^m A_i$ the
block-diagonal matrix with the diagonal blocks $A_1, \ldots , A_m$ (in that order).
For an $m \times n$-matrix $A$, denote $\sk{A}$ the skew symmetric $(m+n) \times (m+n)$-matrix given by
$\sk{A}=\left(\begin{smallmatrix} 0_m & A \\ -A^t & 0_n \end{smallmatrix}\right)$.
For $n \ge 1$, denote $I^c_{2n}$ the block-diagonal $(2n \times 2n)$-matrix given by $I^c_{2n}=\oplus_{i=1}^n \sk{I_1}$.

We use the canonical form of a real skew-symmetric pencil $xJ_1 + y J_2$ given in \cite[Theorem~5.1]{LR} slightly
modified for our purposes. First of all, if the matrices $J_1, J_2$ have a common kernel (this corresponds to the zero
diagonal block $0_u$ in \cite[Eq.~(5.1)]{LR}), then the corresponding two-step nilpotent Lie algebra decomposes into the
direct sum of a two-step nilpotent Lie algebra of a smaller dimension and an abelian ideal. As an abelian Lie algebra
is an Einstein nilradical, \cite[Proposition~3.3]{La2} (or \cite[Theorem~7]{Ni4})
shows that adding it as a direct summand does not affect the property of a Lie algebra to be an Einstein nilradical.
Hence we lose no generality by assuming that $\Ker J_1 \cap \Ker J_2 = 0$, that is, that there are no zero blocks in
the canonical form.

Next, as our elementary divisors are reduced, the canonical form will have no blocks as on the second line
of \cite[Eq.~(5.1)]{LR} (the ``infinite" elementary divisors). On the other hand, if all the elementary divisors of
the pencil $xJ_1 + y J_2$ are of the form $(x+a_iy)^{l_i}$ and the number of the different $a_i$'s is at most
two, we can replace $J_1$ and $J_2$ by their appropriate linear combinations in such a
way that the resulting pencil has only the elementary divisors of the form $x^{l}$ and $y^{l}$.

Finally, permuting the rows and the columns in every block in \cite[Eq.~(5.1)]{LR}, we arrive at the following (disjoint)
cases for the canonical form:

\underline{Case 1.} The canonical form is
\begin{equation}\label{eq:can3}
\begin{gathered}
    J_1=\bigoplus\nolimits_{i=1}^u \sk{I_{l_i}} \oplus \bigoplus\nolimits_{i=1}^w \sk{I_{2n_i}}
    \oplus \bigoplus\nolimits_{j=1}^v \sk{L_{k_j}},
    \\
    J_2=\bigoplus\nolimits_{i=1}^u \sk{(a_i I_{l_i}+N_{l_i})}
    \oplus \bigoplus\nolimits_{i=1}^w \sk{(\mu_i I_{2n_i}+ \nu_i I^c_{2n_i} + N^2_{2n_i})}
    \oplus \bigoplus\nolimits_{j=1}^v \sk{R_{k_j}},
    \\
    \text{and $\#\{a_1, \ldots, a_u,
    \mu_1 + \mathrm{i} \nu_1, \mu_1 - \mathrm{i} \nu_1, \ldots, \mu_w + \mathrm{i} \nu_w, \mu_w - \mathrm{i} \nu_w\}
    \ge 3$}.
\end{gathered}
\end{equation}
In \eqref{eq:can3}, $\nu_i \ne 0, a_i, \mu_i$ are arbitrary real numbers (some of them could be equal to one
another), and $l_i, n_i, k_j \in \mathbb{N}$. Each of the numbers $u, v, w \ge 0$ is
allowed to be zero (in which case the corresponding blocks are omitted), as long as the condition on the third line
of \eqref{eq:can3} is satisfied.
The reduced elementary divisors of the pencil $xJ_1 + yJ_2$ over $\br$ are
$(x+a_iy)^{l_i},\; 1 \le i \le u$, and $((x+\mu_iy)^2+(\nu_iy)^2)^{n_i}, \; 1 \le i \le w$,
the minimal indices are $k_1, \ldots, k_v$.

\underline{Case 2.} The canonical form is
\begin{equation}\label{eq:can2}
    J_1=\bigoplus\nolimits_{i=1}^{u'} \sk{I_{l_i}} \oplus \bigoplus\nolimits_{i=u'+1}^u \sk{N_{l_i}}
    \oplus \bigoplus\nolimits_{j=1}^v \sk{L_{k_j}},
    \quad
    J_2=\bigoplus\nolimits_{i=1}^{u'} \sk{N_{l_i}} \oplus \bigoplus\nolimits_{i=u'+1}^u \sk{I_{l_i}}
    \oplus \bigoplus\nolimits_{j=1}^v \sk{R_{k_j}}.
\end{equation}
In \eqref{eq:can2}, $l_i, k_j \in \mathbb{N}$ and $u', u, v \ge 0$ (can be zero), with the only requirement that
$J_1$ and $J_2$ are linearly independent.
The (unreduced) elementary divisors of the pencil $xJ_1 + yJ_2$ are $x^{l_i}, \; 1 \le i \le u'$ and
$y^{l_i}, \; u'+1 \le i \le u$, and every elementary
divisor repeats twice, the minimal indices are $k_1, \ldots, k_v$.

\underline{Case 3.} The canonical form is
\begin{equation}\label{eq:canz}
    J_1=\bigoplus\nolimits_{i=1}^w \sk{I_{2n_i}} \oplus \bigoplus\nolimits_{j=1}^v \sk{L_{k_j}},
\qquad
    J_2=\bigoplus\nolimits_{i=1}^w \sk{(\mu I_{2n_i}+ \nu I^c_{2n_i} + N^2_{2n_i})}
    \oplus \bigoplus\nolimits_{j=1}^v \sk{R_{k_j}},
\end{equation}
where $w >0, \; v \ge 0, \; \nu \ne 0$, and $n_i, k_j \in \mathbb{N}$.
The reduced elementary divisors of the pencil $xJ_1 + yJ_2$ are $((x+\mu y)^2+(\nu y)^2)^{n_i}$, where $1 \le i \le w$,
the minimal indices are $k_1, \ldots, k_v$.

According to \cite[Theorem~5.1]{LR}, every pair of skew-symmetric matrices $J_1, J_2$, with
$\Ker \! J_1 \cap \Ker \! J_2 = 0$,
can be reduced to one of the forms \eqref{eq:can3}, \eqref{eq:can2},  or \eqref{eq:canz}. Moreover, the numbers
$u, v, w$ and $l_i, n_i, k_j$, up to permutation, ($v, w$ and $n_i, k_j$ in Case~3 respectively) are not only the
affine, but also the projective inva\-riants of the pencil $xJ_1 + yJ_2$ in each of the cases.

\subsection{Case 1}
\label{ss:c1}
This case is a part of the generic case in Theorem~\ref{t:twostep}.
We follow the strategy described in \cite[Section~3]{Ni4}. As the first step, we find a pre-Einstein derivation
for $\n$.

\begin{lemma}\label{l:precase1}
Let $\n$ be a two-step nilpotent Lie algebra of type $(2,q)$ defined by the matrices $J_1, J_2$ of the form
\eqref{eq:can3}. The linear subspace $\t \subset \End(\n)$ defined by
\begin{multline}\label{eq:torus}
    \t=\Bigl\{\begin{pmatrix} A_1 & 0 \\ 0 & 0_2 \end{pmatrix} +
    \begin{pmatrix} \eta I_q & 0 \\ 0 & 2\eta I_2 \end{pmatrix},\;
    A_1= \bigoplus\nolimits_{i=1}^u (\beta_i I_{l_i}\oplus (-\beta_i) I_{l_i}) \oplus
    \bigoplus\nolimits_{i=1}^w (\gamma_i I_{2n_i}\oplus (-\gamma_i) I_{2n_i})
    \\ \oplus \bigoplus\nolimits_{j=1}^v (\delta_j I_{k_j}\oplus (-\delta_j) I_{k_j+1}) \; | \;
    \eta, \beta_i, \gamma_i, \delta_j \in \br\Bigr\}.
\end{multline}
is a maximal $\br$-torus in $\Der(\n)$ (a maximal abelian subalgebra consisting of $\br$-diagonalizable derivations).
A derivation $\phi \in \t$ with $\beta_i=\gamma_i=0, \, \eta =1+\sigma, \, \delta_j=(2k_j+1)^{-1} \sigma, \;
\sigma=-4(q+8-\sum_{j=1}^v\frac{1}{2k_j+1})^{-1}$ is a pre-Einstein derivation for $\n$:
\begin{equation}\label{eq:precase1}
    \phi= (1+\sigma)I_{q_R}
\oplus \bigoplus\nolimits_{j=1}^v ((1+\sigma+\tfrac{1}{2k_j+1}) I_{k_j}\oplus (1+\sigma-\tfrac{1}{2k_j+1}) I_{k_j+1})
    \oplus 2 (1+\sigma)I_{2},
\end{equation}
where $q_R=2\sum_{i=1}^u l_i+ 4\sum_{i=1}^w n_i$ is the dimension of the ``regular part" of the pencil $xJ_1+yJ_2$.
\end{lemma}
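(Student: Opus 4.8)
The plan is to prove the two assertions separately: that $\t$ is a maximal $\br$-torus in $\Der(\n)$, and that the displayed $\phi\in\t$ satisfies the trace identity \eqref{eq:pEtrace}. First I would verify that $\t$ consists of derivations. Writing a general element of $\t$ as in \eqref{eq:torus} and using the block decomposition $J_\a=\sk{B_\a}$ of \eqref{eq:can3}, direct substitution into \eqref{eq:der2step} shows that the $\eta$-direction is a derivation with $M=2\eta I_2$, while each $\beta_i$-, $\gamma_i$- and $\delta_j$-direction, being of the shape $\diag(\lambda I,-\lambda I)$ on its $\sk{\cdot}$-block, gives $J_\a A_1+A_1^t J_\a=0$, i.e.\ $M=0$. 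Since all these operators are simultaneously diagonal in the canonical basis, $\t$ is abelian and $\br$-diagonalizable, hence an $\br$-torus.

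The heart of the argument is maximality, which I would establish by computing the centralizer $Z_{\Der(\n)}(\t)$. Commuting with the $\eta$-direction $\diag(I_q,2I_2)$ forces the lower-left block $U$ of a derivation $\left(\begin{smallmatrix}A_1&0\\U&M\end{smallmatrix}\right)$ to vanish, and commuting with the remaining generators (whose eigenvalues $\pm\beta_i,\pm\gamma_i,\pm\delta_j$ are generically distinct) forces $A_1$ to be block-diagonal with respect to the eigenspaces of $\t$. It then remains to solve \eqref{eq:der2step} block by block, the blocks being coupled only through the single $2\times2$ matrix $M$. The key observation is that $M$ must preserve the elementary-divisor configuration of the pencil, so that, modulo scalars, it induces an element of $\sl(2)$, i.e.\ a vector field on $\mathbb{P}^1$, vanishing at each of the points $a_i$ and $\mu_i\pm\mathrm{i}\nu_i$. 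A nonzero element of $\sl(2)$ has at most two zeros on $\mathbb{P}^1$, so the hypothesis on the third line of \eqref{eq:can3} (at least three such points) forces this element to be zero; hence $M$ is scalar. With $M$ scalar the block equations decouple and show that on every regular, complex, or minimal-index block $A_1$ is a scalar (the corresponding $\beta_i,\gamma_i,\delta_j,\eta$ direction) plus a nilpotent term. Thus $Z_{\Der(\n)}(\t)$ is the sum of $\t$ and an ideal of nilpotent derivations, so $\t$ contains every semisimple derivation commuting with it and is therefore maximal.

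Granting maximality, I would locate $\phi$ using the standard reduction from \cite{Ni4}: it suffices to find $\phi\in\t$ with $\Tr(\phi\psi)=\Tr\psi$ for all $\psi\in\t$. Indeed, fixing a generic $t_0\in\t$ and decomposing $\Der(\n)$ into $\ad_{t_0}$-root spaces, both functionals $\psi\mapsto\Tr(\phi\psi)$ and $\psi\mapsto\Tr\psi$ vanish on every nonzero root space (as $\Tr([t_0,\,\cdot\,])=0$ and $\phi$ commutes with $t_0$), and they also vanish on the nilpotent part of $Z_{\Der(\n)}(\t)$ (a semisimple operator times a commuting nilpotent one is nilpotent); so \eqref{eq:pEtrace} collapses to the torus $\t$, where it becomes a linear system for the parameters $\eta,\beta_i,\gamma_i,\delta_j$ of $\phi$. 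Testing against the $\beta_i$- and $\gamma_i$-directions forces $\beta_i=\gamma_i=0$; testing against the $\delta_j$-direction yields $\delta_j=(2k_j+1)^{-1}(\eta-1)$; and testing against the $\eta$-direction gives a single scalar equation whose solution is $\eta=1+\sigma$ with the stated $\sigma$. This reproduces \eqref{eq:precase1}, and since $\phi$ is diagonal with real entries it is automatically semisimple with real eigenvalues, so it is pre-Einstein and unique in $\t$, consistently with Theorem~\ref{t:preE}.

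The step I expect to be the main obstacle is the maximality claim, and within it the bookkeeping needed to show that no extra semisimple direction is hidden in the individual blocks once $M$ has been forced to be scalar. The projective-rigidity remark---three fixed points on $\mathbb{P}^1$ kill the $\sl(2)$-part of $M$---is precisely where the Case~1 hypothesis enters and is what makes the centralizer computation manageable; the per-block solution of \eqref{eq:der2step} for the three block types is routine but must be carried out carefully, particularly for the minimal-index blocks $\sk{L_{k_j}},\sk{R_{k_j}}$.
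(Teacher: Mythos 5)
Your overall architecture is the same as the paper's: verify $\t\subset\Der(\n)$ directly from \eqref{eq:der2step}, compute the centralizer of $\t$ block by block, use the three-point hypothesis of \eqref{eq:can3} to kill the traceless part of $M$, and then reduce \eqref{eq:pEtrace} to a linear system on $\t$, whose solution ($\beta_i=\gamma_i=0$, $\delta_j=(2k_j+1)^{-1}(\eta-1)$, $\eta=1+\sigma$) you state correctly. Your projective-rigidity remark is in substance identical to the paper's mechanism: the paper shows from \eqref{eq:psider} that $[B_{ai},Q_{ai}]=f(B_{ai})$ with $f(t)=m_{21}+(m_{22}-m_{11})t-m_{12}t^2$, that $f(B_{ai})$ is forced to be nilpotent (via $\Tr(B_{ai}^N[B_{ai},Q_{ai}])=0$), and that a quadratic vanishing at the $\ge 3$ spectral points is zero --- this quadratic \emph{is} your vector field on $\mathbb{P}^1$. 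Note, though, that your formulation ``$M$ must preserve the elementary-divisor configuration'' is not self-justifying; it needs either this direct computation or an integration argument ($\exp(t\psi)\in\Aut(\n)$ acts on the pencil by congruence together with the change of variables by $\exp(tM)$, and by continuity fixes each root).

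There is, however, a genuine error in your maximality step. The claim that $Z_{\Der(\n)}(\t)$ is the sum of $\t$ and an ideal of nilpotent derivations is false whenever $w>0$. Once $M$ is scalar, the block equations only force $Q_{2i}$ to commute with $B_{2i}=\mu_iI_{2n_i}+\nu_iI^c_{2n_i}+N^2_{2n_i}$, hence to be a polynomial in $B_{2i}$, and such polynomials contain genuinely complex semisimple elements: for $n_i=1$ the endomorphism acting as $I^c_2\oplus I^c_2$ on one complex block $\sk{B_{2i}}$ and as zero elsewhere (with $M=0$) is a derivation commuting with $\t$ which is semisimple with eigenvalues $\pm\mathrm{i}$, and lies neither in $\t$ nor among the nilpotents. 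So your conclusion ``$\t$ contains every semisimple derivation commuting with it'' is false, and the Jordan-decomposition argument does not close as written. Maximality of $\t$ as an $\br$-torus is nonetheless true, and the paper's proof secures it by restricting from the outset to \emph{$\br$-diagonalizable} $\psi$ commuting with $\t$: an $\br$-diagonalizable polynomial in $B_{ai}$ must be a real scalar (its eigenvalues are $p(a_i)$, respectively $p(\mu_i\pm\mathrm{i}\nu_i)$), which is exactly what is needed; on the minimal-index blocks the paper moreover shows the centralizer is exactly scalar, with no nilpotent part at all. The same oversight leaks into your reduction of \eqref{eq:pEtrace} to $\t$: the zero root space of a generic $t_0\in\t$ contains, besides $\t$ and the nilpotent part, these compact rotation directions, on which the vanishing of $\psi\mapsto\Tr\psi$ and $\psi\mapsto\Tr(\phi\psi)$ must also be checked --- it does hold ($\Tr I^c_{2n}=0$ and $\phi\in\t$ is scalar on each block), so this part of your argument is patchable; alternatively one can simply invoke, as the paper does, the reduction to $\psi\in\t$ from the proof of Theorem~1 of \cite{Ni4}.
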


\begin{proof}
From \eqref{eq:der2step} it is easy to see that $\t \subset \Der(\n)$, so $\t$ is an $\br$-torus of derivations. Let
$\psi \in \Der(\n)$ be an $\br$-diagonalizable
derivation commuting with all the elements of $\t$. Then from \eqref{eq:torus},
\begin{equation*}
\psi= \bigoplus\nolimits_{i=1}^u (P_{1i} \oplus Q_{1i}) \oplus
    \bigoplus\nolimits_{i=1}^w (P_{2i} \oplus Q_{2i})
    \oplus \bigoplus\nolimits_{j=1}^v (P_{3j} \oplus Q_{3j}) \oplus M,
\end{equation*}
where $P_{1i}, Q_{1i} \in \Mat(l_i,l_i), \; P_{2i}, Q_{2i} \in \Mat(2n_i,2n_i),
\; P_{3j}  \in \Mat(k_j,k_j), \; Q_{3j} \in \Mat(k_j+1,k_j+1)$, and $M \in \Mat(2,2)$.

By \eqref{eq:der2step}, the fact that $\psi \in \Der \n$ is equivalent to the following system of matrix equations:
\begin{equation}\label{eq:psider}
    \left\{ \begin{array}{l}
    Q_{ai} + P_{ai}^t=m_{11} I+m_{12} B_{ai}\\ B_{ai}Q_{ai} + P_{ai}^tB_{ai}=m_{21} I+m_{22} B_{ai},
    \end{array} \right.  \; a=1,2, \quad
    \left\{ \begin{array}{l}
    L_{k_j}Q_{3j} + P_{k_j}^tL_{k_j}=m_{11} L_{k_j}+m_{12} R_{k_j}\\
    R_{k_j}Q_{3j} + P_{k_j}^tR_{k_j}=m_{21} L_{k_j}+m_{22} R_{k_j},
    \end{array} \right.
\end{equation}
where $B_{1i}=a_i I_{l_i}+N_{l_i}, \; i=1, \ldots, u, \;
B_{2i}=\mu_i I_{2n_i}+ \nu_i I^c_{2n_i} + N^2_{2n_i}, \; i=1, \ldots, w$, the $m_{ij}$'s are the entries of $M$,
and the identity matrices on the right-hand side of the first system of \eqref{eq:psider} have the corresponding
sizes (that is, $l_i$ when $a=1$, and $2n_i$ when $a=2$).

From the first system of \eqref{eq:psider}, we obtain $P_{ai}^t=m_{11} I+m_{12} B_{ai} - Q_{ai}$ and
$B_{ai}Q_{ai}-Q_{ai}B_{ai}= f(B_{ai})$, where $f(t)=m_{21} +(m_{22}-m_{11}) t - m_{12} t^2$. For any
$N \ge 0, \; \Tr (B_{ai}^N[B_{ai},Q_{ai}])=0$, hence for any polynomial $f_0, \; \Tr (f_0(t)f(t))_{|t=B_{ai}}=0$,
which implies that $f(B_{ai})$ is a nilpotent matrix for all $a=1,2$, and all  the corresponding $i$'s.

Then the condition $\#\{a_i, \mu_i \pm \mathrm{i} \eta_i\} \ge 3$
of \eqref{eq:can3} and the fact that $f$ is at most quadratic imply that $f=0$, that is, $m_{12}=m_{21}=0$ and
$m_{22}=m_{11}$, so $M= 2 \eta \, I_2$ for some $\eta \in \br$. Moreover, as $f=0$,
the matrix $Q_{ai}$ commutes with $B_{ai}$, for every $a=1,2$, and for every $i$.
As the elementary divisors of $B_{ai}$ over $\bc$ are coprime (there is just one of them, $(a_i-\la)^{l_i}$,
when $a=1$, and two complex conjugate, $(\mu_i \pm \mathrm{i} \nu_i-\la)^{n_i}$, when $a=2$), each $Q_{ai}$ is a
polynomial of $B_{ai}$ (see e.g. \cite[Chapter~8, \S3]{Gan}). As $\psi$ is $\br$-diagonalizable,
this implies that all the $Q_{ai}$'s are scalar matrices:
$Q_{1i}=(\eta -\beta_i) I_{l_i}, \; i=1, \ldots, u$, $Q_{2i}=(\eta -\gamma_i) I_{2n_i}, \; i=1, \ldots, w$, for some
$\beta_i, \gamma_i \in \br$. Then $P_{1i}=(\eta +\beta_i) I_{l_i}, \; P_{2i}=(\eta +\gamma_i) I_{2n_i}$.

Now, for every $k=k_j$, the second system of \eqref{eq:psider} gives
$L_k Q + P^t L_k=2\eta  L_k$, $R_k Q + P^t R_k=2\eta  R_k$, for the corresponding matrices $P$ and $Q$. Replacing
$P$ and $Q$ by $P-\eta I_k$ and $Q-\eta I_{k+1}$ respectively we obtain $L_k Q + P^t L_k= R_k Q + P^t R_k=0$.
It follows that $(Q)_{ij}=(-P^t)_{ij}, \; (Q)_{i+1,j}=(-P^t)_{i,j+1}$ for $1 \le i,j \le k$, and
$(Q)_{i,k+1}=(Q)_{i+1,1}=0$ for $1 \le i \le k$, which easily implies that $Q=\delta I_{k+1}$ and $P=-\delta I_k$
for some $\delta \in \br$.

Therefore, the torus $\t$ contains all the $\br$-diagonalizable derivations $\psi$ of $\n$, which commute with $\t$,
hence $\t$ is a maximal $\br$-torus in $\Der(\n)$.

As it follows from the proof of assertion 1 (b) of \cite[Theorem~1]{Ni4}, every maximal $\br$-torus $\t$ contains a
unique pre-Einstein derivation, and what is more, for a derivation $\phi \in \t$ to be pre-Einstein, it is sufficient
that \eqref{eq:pEtrace} is satisfied
for all $\psi \in \t$. The fact that the derivation $\phi$ given by \eqref{eq:precase1} indeed has that
property is a matter of a direct calculation.
\end{proof}

Our next step is to show that condition (i) in (A) of Theorem~\ref{t:twostep} is necessary
for a nilpotent Lie algebra $\n=(\br^{2+q},\mu)$ from Case~1 to be an Einstein nilradical. We will use
Theorem~\ref{t:var}. Introduce the diagonal matrices $D_n=\diag(1,2, \ldots, n)$ and
$D^c_{2n}=\diag(1,1,2,2, \ldots, n-1,n-1,n,n)$ and define
\begin{equation*}
A=A_1 \oplus 0_2, \quad A_1=\bigoplus\nolimits_{i=1}^u ((-D_{l_i})\oplus D_{l_i}) \oplus
    \bigoplus\nolimits_{i=1}^w ((-D_{2n_i})\oplus D_{2n_i}) \oplus 0_{q-q_R}.
\end{equation*}
Then $A$ commutes with the pre-Einstein derivation $\phi$ and $\Tr A = \Tr (A \phi) = 0$, so $A \in \g_\phi$,
the Lie algebra of the group $G_\phi$ given by \eqref{eq:Ggphi}.
The limit $\mu'=\lim_{t \to \infty} \exp(tA).\mu$ exists, and $\n'=(\br^{2+q},\mu')$ is
a two-step nilpotent Lie algebra of type $(2,q)$ defined by the matrices
$J'_\a=\lim_{t \to \infty} \exp(-tA_1) J_\a \exp(-tA_1)$ (by \eqref{eq:Jchange}).
It follows from \eqref{eq:can3} that $J_1'=J_1$, and $J_2'$ is obtained from $J_2$ by removing all the nilpotent
parts ($N_{l_i}$ and $N^2_{2n_i}$) from the corresponding blocks. Then the number of the elementary
divisors of the pencil $xJ_1'+yJ'_2$ is different from that of the pencil $xJ_1+yJ_2$, unless there are no nilpotent
blocks in $J_2$ (that is, unless $l_i=n_i=1$). As that number is a projective invariant of the pencil (and hence is an
isomorphic invariant of the algebra), the orbit $G_\phi.\mu$ is non-closed, hence $\n$ is not an Einstein nilradical by
Theorem~\ref{t:var}, unless $l_i=1, \; i=1, \ldots, u$, and $n_i=1, \; i=1, \ldots, w$.

From now on, we assume that $l_i=n_i=1$ in \eqref{eq:can3}. We want to show that $\n=(\br^{2+q},\mu)$ is an Einstein
nilradical if and only if condition (ii) is satisfied.

Fix an inner product $\ip$ on $\Rn$ such that the basis $\{X_1, \ldots, X_q, Z_1, Z_2\}$ (for which the matrices
$J_1, J_2$ have the form \eqref{eq:can3}) is orthonormal.

Let $\rho_k$ be the representation of the group $\SL(2)$ on the space $\br^{(k-1)}[x,y]$ of homogeneous polynomials of
degree $k-1$ in two variables ($\rho_k(h)$ is the change of variables by $h^{-1}$). Let $P_k(h)$ be the matrix of
$\rho_k(h)$ relative to the basis $x^{k-1}, x^{k-2}y, \ldots, y^{k-1}$, so that for
$h=\left(\begin{smallmatrix} a & b\\ c & d \end{smallmatrix}\right) \in \SL(2)$,
$\sum_{i=1}^k(P_k(h^{-1}))_{ij}x^{k-j}y^{j-1} = (ax+by)^{k-i}(cx+dy)^{i-1}$. Then
$P_k(h^{-1})(aL_k+bR_k)P_{k+1}(h)=L_k$ and $P_k(h^{-1})(cL_k+dR_k)P_{k+1}(h)=R_k$ (which easily follows from
multiplying the $ij$-th entry of the matrices on the left-hand side by $x^{k+1-j}y^{j-1}$ and then summing up by
$j=1, \ldots, k+1$). So the matrices $P_k(h^{-1})$ and $P_{k+1}(h)$ ``undo" the transformation
$(L_k, R_k) \to (aL_k+bR_k, cL_k+dR_k)$.

Let $\mathcal{G} \subset \GL(2+q)$ be the set of matrices of the form $g=g_1 \oplus h$, where
\begin{equation}\label{eq:subset}
\begin{gathered}
    g_1= \bigoplus\nolimits_{i=1}^u (x_i I_2) \oplus \bigoplus\nolimits_{i=1}^w (y_i I_4)
    \oplus \bigoplus\nolimits_{j=1}^v (\Xi_j P_{k_j}^t(h)\oplus \Theta_j P_{k_j+1}(h^{-1})), \\
    h \in \SL(2), \quad
    \Xi_j=\diag(\xi_1^{(j)}, \ldots, \xi_{k_j}^{(j)}), \quad \Theta_j=\diag(\eta_1^{(j)}, \ldots, \eta_{k_j+1}^{(j)}),
    \quad x_i, y_i, \xi_s^{(j)}, \eta_s^{(j)} \ne 0.
\end{gathered}
\end{equation}

We have the following lemma.
{
\begin{lemma}\label{l:Fh}
The inner product $\ip$ on the Lie algebra $(\br^{2+q},g.\mu)$ is nilsoliton for some $g \in \mathcal{G}$, if and only if
the function $F: \SL(2) \to \br$ defined by
\begin{equation}\label{eq:Fh}
F(h)= \prod\nolimits_{i=1}^u \Tr (h^th \left(\begin{smallmatrix} 1 \\ a_i \end{smallmatrix} \right)
\left(\begin{smallmatrix} 1 \\ a_i \end{smallmatrix} \right)^t)
\prod\nolimits_{i=1}^w ( \Tr (h^th \left(\begin{smallmatrix} 1 & 0 \\ \mu_i & \nu_i \end{smallmatrix} \right)
\left(\begin{smallmatrix} 1 & 0 \\ \mu_i & \nu_i \end{smallmatrix} \right)^t))^2.
\end{equation}
has a critical point.
\end{lemma}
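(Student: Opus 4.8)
The goal is to connect the nilsoliton condition on $(\br^{2+q}, g.\mu)$ for some $g \in \mathcal{G}$ to the existence of a critical point of the explicit function $F$ on $\SL(2)$. The natural tool is Lemma~\ref{l:twostepnil}(1): the inner product $\ip$ (for which the standard basis is orthonormal) is nilsoliton for the bracket $g.\mu$ precisely when the two equations in \eqref{eq:twosteEinstein} hold for the matrices $\tilde J_1, \tilde J_2$ obtained from $g$ via \eqref{eq:Jchange}, with $\phi$ the pre-Einstein derivation of Lemma~\ref{l:precase1} (diagonal, with the eigenvalues recorded in \eqref{eq:precase1}). So my plan is to write out these equations explicitly in the coordinates $x_i, y_i, \xi_s^{(j)}, \eta_s^{(j)}, h$ parametrizing $\mathcal{G}$ in \eqref{eq:subset}, and then show that the whole system is solvable if and only if $F$ has a critical point on $\SL(2)$.

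\textbf{Exploiting the structure of $\mathcal{G}$.}

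First I would compute the transformed matrices. Since $l_i=n_i=1$, the regular blocks of $J_1,J_2$ are small ($2\times 2$ and $4\times 4$), and conjugating $\sk{I_{l_i}}, \sk{(a_iI_{l_i})}$ by the scalar block $x_iI_2$ just multiplies these blocks by $x_i^{-2}$, similarly $y_i^{-2}$ for the complex blocks. The crucial point is the choice of the $\SL(2)$-factor $h$ inside $g_1$: the identities $P_k(h^{-1})(aL_k+bR_k)P_{k+1}(h)=L_k$ and the analogous one for $R_k$, established just before the lemma, show that the $h$-part of $g_1$ acting on the singular (minimal-index) blocks $\sk{L_{k_j}}, \sk{R_{k_j}}$ recombines $J_1,J_2$ by the same linear change of variables that $h$ induces on the pair $(J_1,J_2)$. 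In other words, $h$ acts on the regular blocks as a genuine $\SL(2)$ change of variables $\binom{1}{a_i}\mapsto h\binom{1}{a_i}$ while being "undone" on the singular blocks up to the diagonal scalings $\Xi_j,\Theta_j$. This is exactly why the products $\Tr\!\big(h^th\,\binom{1}{a_i}\binom{1}{a_i}^t\big)$ and the complex analogues appear in \eqref{eq:Fh}.

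\textbf{Separating the equations and the convexity input.}

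Next I would split \eqref{eq:twosteEinstein} into two groups. The first equation $\sum_\a \tilde J_\a \tilde J_\a^t = -2C(I_q-\phi_1)$ is, block by block, a diagonal matrix equation: on each regular and singular block it prescribes the diagonal scalars $x_i,y_i,\xi_s^{(j)},\eta_s^{(j)}$ (together with $C$) in terms of $h$, and since these parameters are free positive numbers this group of equations can always be solved \emph{once $h$ is fixed}, yielding explicit formulas for the scalings. The second equation, $\Tr \tilde J_\a\tilde J_\beta^t = 4C\,\K_{\a\beta}(1-(\phi_2)_{\a\a})$, is the genuinely constraining one: writing $\tilde J_1,\tilde J_2$ in terms of $h$ and the already-determined scalars, the off-diagonal condition ($\a\neq\beta$) forces an orthogonality and the diagonal condition ($\a=\beta$) forces an equal-norm balance between $\tilde J_1$ and $\tilde J_2$. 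The main obstacle, and the technical heart of the argument, is to show that after substituting the scalings from the first group, this second pair of scalar equations is precisely the Euler--Lagrange (critical-point) condition $dF=0$ for the function $F$ on $\SL(2)$: the two scalar constraints should arise as the vanishing of the derivative of $\log F$ along the two generators of $\sl(2)$ complementary to $\so(2)$, with the $\SO(2)$-direction automatically annihilated since $F$ is right-$\SO(2)$-invariant (each trace $\Tr(h^th\,vv^t)$ depends on $h$ only through $h^th$). I expect this identification to require a careful but routine differentiation of the logarithmic trace expressions; the reward is that nilsoliton existence for some $g\in\mathcal{G}$ becomes equivalent to $dF=0$ having a solution, i.e. to $F$ having a critical point, which is the assertion of the lemma.
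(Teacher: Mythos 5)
Your proposal follows essentially the same route as the paper's own proof: apply Lemma~\ref{l:twostepnil} to the transformed matrices from \eqref{eq:Jchange} (using the $P_k(h^{-1})$, $P_{k+1}(h)$ identities on the singular blocks), solve the diagonal equation $\sum_\a J_\a J_\a^t = -2C(I_q-\phi_1)$ for the scalings $x_i, y_i, \xi_s^{(j)}, \eta_s^{(j)}$ once $h$ is fixed, and then identify the remaining trace conditions with the vanishing of the logarithmic derivative of $F$ along $\sl(2)$ --- exactly the paper's observation that the system amounts to a symmetric matrix $d_h$ being proportional to the identity, with $\tfrac{d}{dt}_{|t=0}\ln F(\exp(tA)h)=\Tr(d_h A)$ and the skew ($\so(2)$) directions vanishing automatically. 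The only detail you pass over lightly, which the paper verifies via $\sigma<0$ and $ad-bc=1$, is that the solved expressions for the scalings in \eqref{eq:xiyi} are genuinely positive; otherwise your outline matches the published argument.
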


\begin{proof} The proof is essentially a direct computation.
By \eqref{eq:Jchange}, for $g \in \mathcal{G}$, with
$h=\left(\begin{smallmatrix} a & b\\ c & d \end{smallmatrix}\right) \in \SL(2)$, the algebra $(\br^{2+q},g.\mu)$
is defined by the matrices
\begin{equation}\label{eq:case1gmu}
\begin{gathered}
    J_1=\bigoplus\nolimits_{i=1}^u \sk{(x_i^{-2}(a+ba_i)I_1)} \oplus
    \bigoplus\nolimits_{i=1}^w \sk{(y_i^{-2}((a+ b \mu_i) I_2+ b \nu_i I^c_2))} \oplus
    \bigoplus\nolimits_{j=1}^v \sk{(\Xi_j^{-1}L_{k_j}\Theta_j^{-1})},
    \\
    J_2=\bigoplus\nolimits_{i=1}^u \sk{(x_i^{-2}(c+da_i)I_1)} \oplus
    \bigoplus\nolimits_{i=1}^w \sk{(y_i^{-2}((c+ d \mu_i) I_2+ d \nu_i I^c_2))} \oplus
    \bigoplus\nolimits_{j=1}^v \sk{(\Xi_j^{-1}R_{k_j}\Theta_j^{-1})}.
\end{gathered}
\end{equation}
By Lemma~\ref{l:twostepnil}, the inner product $\ip$ on the Lie algebra $(\br^{2+q},g.\mu)$ is nilsoliton if and only
if for the matrices $J_1, J_2$ given by \eqref{eq:case1gmu}, $J_1 J_1^t + J_2 J_2^t = - 2 C (I_q-\phi_1)$ and
$\Tr J_\a J_\beta^t = 4 C \K_{\a\beta} (1 - (\phi_2)_{\a\a})$, where $\phi=\phi_1 \oplus \phi_2$ is the pre-Einstein
derivation found in Lemma~\ref{l:precase1} and $C <0$.
Solving the equation $J_1 J_1^t + J_2 J_2^t = - 2 C (I_q-\phi_1)$ we find
\begin{equation}\label{eq:xiyi}
\begin{gathered}
x_i^4=(2C\sigma)^{-1} ((a+ba_i)^2+(c+da_i)^2), \quad
y_i^4=(2C\sigma)^{-1} ((a+b\mu_i)^2+b^2\nu_i^2+(c+d\mu_i)^2+d^2\nu_i^2),\\
(\xi_s^{(j)} \theta_s^{(j)})^2=(2k_j+1)(4C\sigma(k_j+1-s))^{-1}, \quad
(\xi_s^{(j)} \theta_{s+1}^{(j)})^2=(2k_j+1)(4C\sigma s)^{-1}, \quad s=1, \ldots, k_j.
\end{gathered}
\end{equation}
As $\sigma=-4(q+8-\sum_{j=1}^v(2k_j+1)^{-1})^{-1} <0$ and $ad-bc=1$, all the expressions on the right-hand side
are positive. Moreover, the equations on the second line of \eqref{eq:xiyi} give a linear system
for $\ln\xi_s^{(j)}, \ln \theta_s^{(j)}$, from which one can easily find $\xi_s^{(j)}$ and $\theta_s^{(j)}$.

Substituting \eqref{eq:xiyi} to the equations
$\Tr J_\a J_\beta^t = 4 C \K_{\a\beta} (1 - (\phi_2)_{\a\a}), \; \a,\beta=1,2$, we find that
the inner product $\ip$ on $(\br^{2+q},g.\mu), \; g=g_1 \oplus h \in \mathcal{G}$, is nilsoliton if and only if the
the entries of  $g_1$ are given by (\ref{eq:subset}, \ref{eq:xiyi}), and
$h=\left(\begin{smallmatrix} a & b\\ c & d \end{smallmatrix}\right) \in \SL(2)$ satisfies the following
system of equations:
\begin{equation*}
\begin{split}
    &\sum\nolimits_{i=1}^u 2\tfrac{(a+ba_i)^2}{(a+ba_i)^2+(c+da_i)^2}+
    \sum\nolimits_{i=1}^w 4\tfrac{(a+b\mu_i)^2+b^2\nu_i^2}{(a+b\mu_i)^2+b^2\nu_i^2+(c+d\mu_i)^2+d^2\nu_i^2} \\=&
    \sum\nolimits_{i=1}^u 2\tfrac{(c+da_i)^2}{(a+ba_i)^2+(c+da_i)^2}+
    \sum\nolimits_{i=1}^w 4\tfrac{(c+d\mu_i)^2+d^2\nu_i^2}{(a+b\mu_i)^2+b^2\nu_i^2+(c+d\mu_i)^2+d^2\nu_i^2} =
    C \sigma (2u + 4w), \\
    &\sum\nolimits_{i=1}^u 2\tfrac{(a+ba_i)(c+da_i)}{(a+ba_i)^2+(c+da_i)^2}+
    \sum\nolimits_{i=1}^w 4\tfrac{(a+b\mu_i)(c+d\mu_i)+bd\nu_i^2}{(a+b\mu_i)^2+b^2\nu_i^2+(c+d\mu_i)^2+d^2\nu_i^2} =0.
\end{split}
\end{equation*}
As $C<0$ can be chosen arbitrarily,
this system
is equivalent to the fact that the $2\times 2$ symmetric matrix
\begin{equation*}
d_h=
    \sum\nolimits_{i=1}^u 2
    \|h \left(\begin{smallmatrix} 1 \\ a_i \end{smallmatrix} \right)\|^{-2}
    (h \left(\begin{smallmatrix} 1 \\ a_i \end{smallmatrix} \right))
    (h \left(\begin{smallmatrix} 1 \\ a_i \end{smallmatrix} \right))^t+
    \sum\nolimits_{i=1}^w 4
    \Tr ((h \left(\begin{smallmatrix} 1 & 0 \\ \mu_i & \nu_i \end{smallmatrix} \right))
    (h \left(\begin{smallmatrix} 1 & 0 \\ \mu_i & \nu_i \end{smallmatrix} \right))^t)^{-1}
    (h \left(\begin{smallmatrix} 1 & 0 \\ \mu_i & \nu_i \end{smallmatrix} \right))
    (h \left(\begin{smallmatrix} 1 & 0 \\ \mu_i & \nu_i \end{smallmatrix} \right))^t
\end{equation*}
is proportional to the identity. The claim now follows from the fact that for every
$A \in \sl(2)$ and every $h \in \SL(2), \quad \frac{d}{dt}_{|t=0} (\ln F( \exp(tA) h)) = \Tr (d_hA)$.
\end{proof}
}

As it is immediate from \eqref{eq:Fh}, for every $U \in \SO(2),\quad F(Uh)=F(h)$, which implies that $F$ descends
to a function $\Phi$ on the homogeneous space $\mathbb{H}^2=\SL(2)/\SO(2)$ of the right cosets. The space
$\mathbb{H}^2$ equipped with
the right-invariant metric induced by the inner product $Q(A_1,  A_2) = \Tr (A_1 A_2^t)$ on $\sl(2)$ is isometric to
the hyperbolic plane $\mathbb{H}^2$. Let $\pi:\SL(2) \to \mathbb{H}^2$ be the natural projection (say, for the
Poincar\'{e} model in the half-plane $\mathrm{Im}(z) >0$, one can take
$\pi(\left(\begin{smallmatrix} a & b\\ c & d \end{smallmatrix}\right)) = \frac{d\mathrm{i}-b}{-c\mathrm{i}+a}$).
Then $\Phi: \mathbb{H}^2 \to \br$ is defined by $\Phi(z)=F(h)$, where $h \in \pi^{-1}(z)$.

{
\begin{lemma}\label{l:Phiconvex}
\begin{enumerate}[\rm 1.]
    \item
    The function $\Phi: \mathbb{H}^2 \to \br$ is positive and geodesically convex.
    \item
    If condition \emph{(ii)} of \emph{(A)} of Theorem~\ref{t:twostep} is satisfied, then $\Phi$ tends to infinity
    along any geodesic ray.
    If condition \emph{(ii)} is violated, then there exists a geodesic ray $\Gamma: \br^{+} \to \mathbb{H}^2$ such that
    $\lim_{t\to\infty}\Phi(\Gamma(t))$ exists and is finite.
\end{enumerate}
\end{lemma}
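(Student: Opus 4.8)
The plan is to work with the parametrization of $\mathbb{H}^2$ by the symmetric positive-definite matrices $S=h^th$ of determinant one, so that $\Phi(S)=\prod_{i=1}^u (v_i^tSv_i)\prod_{i=1}^w(\Tr(SW_iW_i^t))^2$, where $v_i=\left(\begin{smallmatrix}1\\a_i\end{smallmatrix}\right)$ and $W_i=\left(\begin{smallmatrix}1&0\\\mu_i&\nu_i\end{smallmatrix}\right)$. Positivity is immediate, since every factor is a squared norm: $v_i^tSv_i=\|S^{1/2}v_i\|^2>0$ and $\Tr(SW_iW_i^t)=\|S^{1/2}W_i\|^2_{HS}>0$, the latter being nonzero because $\det W_i=\nu_i\ne 0$. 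A complete geodesic through $S_0$ has the form $S(t)=S_0^{1/2}e^{tX}S_0^{1/2}$ with $X$ symmetric and traceless; writing $h_0$ in polar form shows that these are exactly the projections to $\mathbb{H}^2$ of the curves $t\mapsto e^{tA}h_0$ (with $A$ symmetric) appearing in Lemma~\ref{l:Fh}.

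For convexity I would first establish the elementary fact that, for a fixed positive-semidefinite $P$, the function $t\mapsto\log\Tr(S(t)P)$ is convex. Indeed, $\Tr(S(t)P)=\Tr(e^{tX}\tilde P)$ with $\tilde P=S_0^{1/2}PS_0^{1/2}\succeq0$; diagonalizing $X$ and spectrally decomposing $\tilde P$ expresses this trace as an exponential sum $\sum_a b_ae^{t\mu_a}$ with nonnegative coefficients $b_a$, and every such sum is log-convex by Cauchy--Schwarz. Applying this with $P=v_iv_i^t$ and $P=W_iW_i^t$ shows that $\log\Phi$, being a nonnegative-coefficient combination of such terms, is geodesically convex; since $\exp$ is convex and increasing, $\Phi=\exp(\log\Phi)$ is geodesically convex as well. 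This settles assertion~1.

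For assertion~2 I would read off the exact asymptotics along a ray $S(t)=S_0^{1/2}e^{tX}S_0^{1/2}$, $t\ge0$, where $X$ has eigenvalues $\pm\kappa$ ($\kappa>0$) with contracting eigendirection $w_-$. After diagonalizing $X$, each factor reduces to a clean sum $\alpha e^{t\kappa}+\gamma e^{-t\kappa}$, so its logarithm equals $(\pm\kappa)t+O(1)$. A rank-one factor $\log(v_i^tS(t)v_i)$ has slope $+\kappa$ unless $S_0^{1/2}v_i\parallel w_-$, in which case the slope is $-\kappa$; a rank-two factor $\log\Tr(S(t)W_iW_i^t)$ always has slope $+\kappa$, because the positive-definite matrix $S_0^{1/2}W_iW_i^tS_0^{1/2}$ has a strictly positive component along the expanding direction. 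Summing gives $\log\Phi(\Gamma(t))=(u-2m+2w)\kappa\,t+O(1)$, where $m$ is the number of indices $i$ with $S_0^{1/2}v_i\parallel w_-$; since distinct $a_i$ give non-proportional $v_i$, choosing the ray so that $w_-$ points along $S_0^{1/2}\left(\begin{smallmatrix}1\\x\end{smallmatrix}\right)$ realizes $m=\#\{i:a_i=x\}$ while keeping all other slopes equal to $+\kappa$.

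Finally I would compare the total slope $(u-2m+2w)\kappa$ with condition~(ii). It is positive for every ray precisely when $\#\{i:a_i=x\}<\tfrac12u+w$ for all $x$, in which case $\Phi(\Gamma(t))\to\infty$; if (ii) fails for some $x_0$, taking $S_0=I$ and $w_-$ along $\left(\begin{smallmatrix}1\\x_0\end{smallmatrix}\right)$ makes the slope $\le0$, so $\log\Phi(\Gamma(t))$ either converges or tends to $-\infty$, and hence $\Phi(\Gamma(t))$ has a finite limit. The main obstacle is the slope bookkeeping: one must verify that the rank-two factors can never contract and that at most $\max_x\#\{i:a_i=x\}$ of the rank-one factors can contract simultaneously, which is exactly what ties the geometry of the rays to the combinatorial threshold $\tfrac12u+w$.
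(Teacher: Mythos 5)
Your proof is correct and follows essentially the same route as the paper: in both arguments the restriction of $\Phi$ to a geodesic becomes an exponential sum with nonnegative coefficients, your contracting-direction condition $S_0^{1/2}v_i\parallel w_-$ is exactly the paper's vanishing condition $a+ba_i=0$ for the leading coefficient, and your count $m$ is the paper's $n(\Gamma)$, so the slope bookkeeping and the threshold $\tfrac12 u+w$ come out identically. The only cosmetic difference is in assertion~1, where you establish the (slightly stronger) geodesic convexity of $\log\Phi$ by factorwise Cauchy--Schwarz, whereas the paper differentiates the exponential sum $\sum_I A_I e^{It}$ twice to obtain convexity of $\Phi$ directly.
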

\begin{proof}
1. Consider the restriction of $\Phi$ to a geodesic of $\mathbb{H}^2$. A unit speed geodesic of $\mathbb{H}^2$ is given
by $\Gamma(t)=\pi(\exp \left(\frac12 t \left(\begin{smallmatrix} 1 & 0\\ 0 & -1 \end{smallmatrix}\right)\right) h)$,
where $h \in \SL(2)$. For
$h=\left(\begin{smallmatrix} a & b\\ c & d \end{smallmatrix}\right)$, we have
\begin{align*}
\Phi(\Gamma(t))&=F(\exp \left(\tfrac12 t \left(\begin{smallmatrix} 1 & 0\\ 0 & -1 \end{smallmatrix}\right)\right)h )
\\&=
\prod\nolimits_{i=1}^u (e^t (a+ba_i)^2 + e^{-t} (c+da_i)^2)
\prod\nolimits_{i=1}^w (e^t ((a+b\mu_i)^2 + b^2\nu_i^2)+ e^{-t} ((c+d\mu_i)^2 + d^2\nu_i^2))^2
\\&=
\sum\nolimits_{I=-u-2w}^{u+2w} A_I e^{It}.
\end{align*}
Since all the coefficients $A_I$ are nonnegative and at least one of them is positive, we obtain that $\Phi > 0$ and
$\frac{d^2}{dt^2}\Phi(\Gamma(t)) > 0$, so $\Phi$ is geodesically convex.

2. For a geodesic
$\Gamma(t)= \pi(\exp \left(\frac12 t \left(\begin{smallmatrix} 1 & 0\\ 0 & -1 \end{smallmatrix}\right)\right)
\left(\begin{smallmatrix} a & b\\ c & d \end{smallmatrix}\right))$, denote $n(\Gamma)=\#\{i: a+ba_i=0\}$. Without
loss of generality, we can assume that $a+ba_i=0$ for all $1 \le i \le n(\Gamma)$. Then $a+ba_i \ne 0$ for
$n(\Gamma) < i \le u$,
and $c+da_i \ne 0$ for $ 1 \le i \le n(\Gamma)$ (as $ad-bc=1$). Moreover, $(a+b\mu_i)^2 + b^2\nu_i^2 > 0$ for all
$i=1, \ldots, w$, as $\nu_i \ne 0$. Then $A_I = 0$ for all $I >2w+u-2n(\Gamma)$, and
$A_{2w+u-2n(\Gamma)} \ge \prod\nolimits_{i=1}^{n(\Gamma)} (c+da_i)^2
\prod\nolimits_{i=n(\Gamma)+1}^u (a+ba_i)^2$ $\times\prod\nolimits_{i=1}^w ((a+b\mu_i)^2 + b^2\nu_i^2)^2 > 0$.
It follows that $\lim_{t \to \infty}\Phi(\Gamma(t)) = \infty$ (respectively $\lim_{t \to \infty}\Phi(\Gamma(t))$
exists and is finite) if and only if $n(\Gamma) < \frac12 u + w$ (respectively $n(\Gamma) \ge \frac12 u + w$).

As the maximum of $n(\Gamma)$ taken over all the geodesic rays $\Gamma$ of $\mathbb{H}^2$ equals
$\max_{x \in \br}(\#\{i: a_i = x\})$, assertion 2 follows.
\end{proof}
}

Returning to the proof of the theorem, consider a sublevel set $S_M=\{z \in \mathbb{H}^2 : \Phi(z) \le M\}$,
where $M > \inf \Phi$.
The set $S_M$ is nonempty, closed, and geodesically convex by assertion~1
of Lemma~\ref{l:Phiconvex}. What is more, if condition (ii) of (A) of Theorem~\ref{t:twostep} is
satisfied, then $S_M$ is compact, as otherwise $S_M$ would contain a geodesic ray, 
which contradicts assertion~2 of Lemma~\ref{l:Phiconvex}. It follows that the function $\Phi: \mathbb{H}^2 \to \br$ has
a critical point, hence the function $F: \SL(2) \to \br$ has a critical point, which implies that the Lie
algebra $\n$ is an Einstein nilradical by Lemma~\ref{l:Fh}.

\bigskip

It remains to show that if condition (ii) is violated, then $\n=(\br^{2+q},\mu)$ is not an Einstein nilradical.
By Theorem~\ref{t:var}, it suffices to find a curve $g(t) \subset G_\phi$ such that the limit
$\mu^0=\lim_{t\to\infty}g(t).\mu$ exists, but the Lie algebra $\n^0=(\br^{2+q},\mu)$ is not isomorphic to $\n$.
A hint of how to construct such a $g(t)$ is given in assertion~2 of Lemma~\ref{l:Phiconvex}.

Let $m=\max_{x \in \br}(\#\{i: a_i = x\})$ and let $\xi \in \br$ be the value of $x$ at which the maximum is attained.
Without loss of generality, we can assume that $a_1=\ldots=a_m=\xi$ and $a_i \ne \xi$ for $i >m$.
Suppose that condition (ii) is not satisfied, that is, $m \ge \frac12 u + w$.

Define the curve $h(t) \subset \SL(2)$ by
\begin{equation}\label{eq:rayh}
h(t)=\exp \left(\frac12 t \left(\begin{matrix} 1 & 0\\ 0 & -1 \end{matrix}\right)\right)
\left(\begin{matrix} \xi & -1\\ 1 & 0 \end{matrix}\right)=
\left(\begin{matrix} \xi e^{t/2} & -e^{t/2}\\ e^{-t/2} & 0 \end{matrix}\right).
\end{equation}
Note that $\pi(h(t))$ is a geodesic ray in the hyperbolic plane $\mathbb{H}^2$, the restriction of the function $\Phi$
to which has a finite limit at infinity.

Introduce the curve $g(t)=g_1(t) \oplus h(t) \subset \mathcal{G} \subset \GL(2+q)$, where $h(t)$ is as
in \eqref{eq:rayh}, and $g_1(t)$ is given by \eqref{eq:subset}, with
$\Xi_j=I_{k_j}, \; \Theta_j=I_{k_j+1}, \; h=h(t),$ and the following $x_i$'s and $y_i$'s:
\begin{equation}\label{eq:xyray}
     y_i(t)=e^{t/4}, \; i=1, \ldots, w, \qquad
     x_i(t)=\left\{
                                                 \begin{array}{rl}
                                                   e^{-t/4}, & \hbox{$i=1, \ldots, 2w+u-m$;} \\
                                                   1, & \hbox{$i=2w+u-m+1, \ldots, m$;} \\
                                                   e^{t/4}, & \hbox{$i=m+1, \ldots, u$}
                                                 \end{array}
                                               \right.
\end{equation}
(note that the $x_i(t)$'s are well-defined, as $\frac12 u + w \le m \le u$), so that
\begin{equation*}
    g(t)=g_1(t) \oplus h(t) = \bigoplus\nolimits_{i=1}^u (x_i(t) I_2) \oplus  (e^{t/4} I_{4w})
    \oplus \bigoplus\nolimits_{j=1}^v (P_{k_j}^t(h(t))\oplus P_{k_j+1}(h(t)^{-1})) \oplus h(t).
\end{equation*}

The curve $g(t)$ lies in the group $G_\phi$ defined by \eqref{eq:Ggphi}. Indeed, as it follows from
\eqref{eq:precase1}, the pre-Einstein derivation $\phi$ has eigenvalues
$1+\sigma, 1+\sigma \pm \frac{1}{2k_j+1}$, and $2(1+\sigma)$, with the corresponding
eigenspaces $\n_{1+\sigma}$ of dimension $2u+4w$,
$\n_{1+\sigma + (2k+1)^{-1}}$ of dimension $2k \cdot \#\{j:k_j=k\}$,
$\n_{1+\sigma - (2k+1)^{-1}}$ of dimension $2(k+1) \cdot \#\{j:k_j=k\}$,
and $\n_{2(1+\sigma)}$ of dimension $2$ respectively. For all $t \in \br$, these eigenspaces are $g(t)$-invariant,
and moreover, $g(t) \in \SL(\n_{1+\sigma}) \times
\prod_{k}(\SL(\n_{1+\sigma + (2k+1)^{-1}}) \times \SL(\n_{1+\sigma - (2k+1)^{-1}})) \times \SL(2) \subset G_\phi$,
as $\prod_i x_i^2 \prod_i y_i^4 =1$ by \eqref{eq:xyray}, $h(t) \in \SL(2)$ by \eqref{eq:rayh}, and $P_k(h)$ is the
matrix of $\rho_k(h)$, where $\rho_k$ is a finite-dimensional representation of $\SL(2)$, so $\det P_k(h) =1$.

According to \eqref{eq:case1gmu} and \eqref{eq:Jchange}, the two-step nilpotent Lie algebra $(\br^{2+q}, g(t).\mu)$
is defined by the matrices
\begin{equation*}
\begin{gathered}
    J_1(t)=\bigoplus\nolimits_{i=1}^u \sk{(x_i(t)^{-2}e^{t/2}(\xi-a_i)I_1)} \oplus
    \bigoplus\nolimits_{i=1}^w \sk{((\xi- \mu_i) I_2 - \nu_i I^c_2)} \oplus
    \bigoplus\nolimits_{j=1}^v \sk{L_{k_j}},
    \\
    J_2(t)=\bigoplus\nolimits_{i=1}^u \sk{(x_i(t)^{-2} e^{-t/2}I_1)} \oplus
    \bigoplus\nolimits_{i=1}^w \sk{(e^{-t}I_2)} \oplus
    \bigoplus\nolimits_{j=1}^v \sk{R_{k_j}}.
\end{gathered}
\end{equation*}
As $a_1 = \ldots = a_m = \xi$ and $2w+u-m\le m$, the limits $\lim_{t\to\infty}J_\a(t):=J_\a^0$ exist, and
\begin{equation*}
\begin{gathered}
    J_1^0=0_{2m} \oplus \bigoplus\nolimits_{i=m+1}^u \sk{((\xi-a_i)I_1)} \oplus
    \bigoplus\nolimits_{i=1}^w \sk{((\xi- \mu_i) I_2 - \nu_i I^c_2)} \oplus
    \bigoplus\nolimits_{j=1}^v \sk{L_{k_j}},
    \\
    J_2^0=I^c_{2(2w+u-m)} \oplus 0_{2m} \oplus \bigoplus\nolimits_{j=1}^v \sk{R_{k_j}}.
\end{gathered}
\end{equation*}
The pencil $xJ_1^0 + y J_2^0$ has only two (unreduced) elementary divisors, $x$ and $y$ (both repeated $2u+4w-2m$ times),
and a $(4m-4w-2u)$-dimensional block of common zeros, so it is not
projectively equivalent to the pencil $xJ_1 + y J_2$ (in fact, it even belongs to Case 2 given by \eqref{eq:can2}).
Therefore, the algebra $\n^0=(\br^{2+q},\lim_{t\to\infty} g(t).\mu)$ is not isomorphic to $\n$, so $\n$ is
not an Einstein nilradical by Theorem~\ref{t:var}.
\qed

\begin{remark} \label{rem:eigen1}
In the cases when $\n$ is an Einstein nilradical, the corresponding eigenvalue type is given by the pre-Einstein
derivation found in Lemma~\ref{l:precase1} (note that, in contrast,  in the subsingular case the torus is ``bigger"
and a pre-Einstein derivation is substantially different). In particular, if the pencil $xJ_1+yJ_2$ has no
``singular part", the eigenvalue type of the corresponding Einstein nilradical is $(1,2;q,p)$.

As to the nilsoliton inner product, it can be constructed explicitly (by using (\ref{eq:subset}, \ref{eq:xiyi}) to
obtain $g \in \mathcal{G}$), provided one can explicitly find the critical points $h \in \SL(2)$ of the function $F$
from Lemma~\ref{l:Fh} (which seems doubtful).
\end{remark}

\subsection{Case 2}
\label{ss:c2}
This case corresponds to the subsingular case in Theorem~\ref{t:twostep}. The matrices $J_1$ and $J_2$ are given
by \eqref{eq:can2}, and by relabelling we can additionally assume that \eqref{eq:lex} holds: lexicographically,
$(\max\nolimits_{1 \le i \le u'}l_i, u') \ge (\max\nolimits_{1 \le i \le u''}l_{u'+i}, u'')$
(the maximum over the empty set is defined to be zero).
Some of the numbers $u, u', u''=u-u'$, and $v$ can be zero, but $\{J_1, J_2\}$ must be linearly independent.

We call a basis $\{e_1, \ldots, e_n\}$ for a nilpotent Lie algebra $\n$ \emph{nice}, if the structural constants
$c_{ij}^k$ relative to that basis satisfy the following two conditions:
for every $i,j, \quad \#\{k: c_{ij}^k \ne 0\}$ $\le 1$, and for every $i,k, \quad \#\{j: c_{ij}^k \ne 0\} \le 1$.
Given a nilpotent algebra of dimension $n$ with a nice basis, we fix an arbitrary ordering on the set
$\Lambda=\{(i,j,k) \, : \, c_{ij}^k \ne 0, \, i < j\}$ and introduce an $m \times n$ matrix $Y$ whose
$a$-th row has $1$ in columns $i$ and $j$, $-1$ in column $k$, and zero elsewhere, where
$1 \le a\le m=\#\Lambda$, and the $a$-th element of $\Lambda$ is $(i,j,k)$.
Denote $[\eta]_m$ an $m$-dimensional vector all of whose coordinates are equal to $\eta$.

\begin{theorem}[{\cite[Theorem~3]{Ni4}}]\label{t:nice}
A nonabelian nilpotent Lie algebra $\n$ with a nice basis is an Einstein nilradical if and only if
there exists a vector $\a \in \br^m$ with positive coordinates satisfying $YY^t \a = [1]_m$.
\end{theorem}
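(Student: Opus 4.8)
The plan is to turn the nilsoliton equation \eqref{eq:ricn} into the stated linear system, exploiting that a nice basis diagonalises the Ricci operator. Throughout, write $c_a=c_{i_aj_a}^{k_a}$ for the $a$-th structure constant, where $(i_a,j_a,k_a)$ is the $a$-th element of $\Lambda$, and let $\a\in\br^m$ be the vector with $\a_a=c_a^2$.

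First I would fix the inner product $\ip$ making $\{e_1,\dots,e_n\}$ orthonormal and evaluate \eqref{eq:riccinilexplicit} on basis vectors. The claim is that both niceness conditions force $\ric$ to be diagonal. In the first sum the product $\<e_k,[e_i,e_j]\>\<e_l,[e_i,e_j]\>$ is nonzero only if $[e_i,e_j]$ has a component along both $e_k$ and $e_l$, which the condition $\#\{k:c_{ij}^k\ne0\}\le1$ rules out unless $k=l$; in the second sum the analogous product vanishes unless $k=l$, this time by $\#\{j:c_{ij}^k\ne0\}\le1$ together with skew-symmetry of the brackets. Collecting the surviving diagonal terms gives $\ric=-\tfrac12\diag(Y^t\a)$: the $l$-th entry of $Y^t\a$ is $\sum_a Y_{al}\a_a$, which adds $\a_a=c_a^2$ over the triples having $l$ as a source ($Y_{al}=+1$) and subtracts it over the triple, if any, having $l$ as target ($Y_{al}=-1$), matching the two sums in \eqref{eq:riccinilexplicit}. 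This computation, which is where niceness is used in full, is the concrete heart of the proof.

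Next I would translate \eqref{eq:ricn} into linear algebra. A diagonal operator $\Phi=\diag(\mathbf d)$ lies in $\Der(\n)$ precisely when $d_{i_a}+d_{j_a}=d_{k_a}$ for all $a$, that is, when $\mathbf d\in\ker Y$; and summing the entries of each row gives $Y[1]_n=[1]_m$. Substituting $\ric=-\tfrac12\diag(Y^t\a)$ into $\ric=C\,\id+\Phi$ and applying $Y$, the $\ker Y$-part is annihilated and one is left with $YY^t\a=-2C[1]_m$. As $C<0$, solvability with $\a>0$ is equivalent, after the positive rescaling by $-2C$, to solvability of $YY^t\a=[1]_m$ with positive $\a$. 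Thus a \emph{diagonal} nilsoliton on $\n$ exists if and only if the stated system has a positive solution.

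The two remaining gaps are the real content, and I expect the reduction to the diagonal case to be the main obstacle. For the ``only if'' part one must know that an Einstein nilradical admits a nilsoliton that is diagonal in the nice basis; for the ``if'' part one must know that a positive solution $\a$, which need not be realised by a diagonal rescaling of the given $c_a$, nonetheless yields a genuine nilsoliton. Both are handled through Theorem~\ref{t:var}. The diagonal derivations $\{\diag(\mathbf d):\mathbf d\in\ker Y\}$ form a torus in $\Der(\n)$, so (as in Lemma~\ref{l:precase1}) the pre-Einstein derivation $\phi$ may be taken diagonal; then its eigenspaces are coordinate subspaces and $T:=G_\phi\cap\{\text{diagonal}\}$ is a maximal torus of $G_\phi$. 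Relative to $T$ the bracket decomposes as $\mu=\sum_a c_a\mu_a$ into weight vectors, the weight of $\mu_a$ being the $T$-projection of the $a$-th row of $Y$, with every coordinate nonzero. By Theorem~\ref{t:var}, $\n$ is an Einstein nilradical if and only if $G_\phi.\mu$ is closed. Here I would invoke the reduction to the maximal torus in the theory of real reductive group actions (as used in \cite{LW}): for such a weight-decomposed vector, $G_\phi.\mu$ is closed if and only if the $T$-orbit is, equivalently the moment map vanishes somewhere on $\overline{T.\mu}$; and the minimal-vector equation, read off through the Gram matrix $YY^t$ of the weights, is exactly $YY^t\a=[1]_m$ with $\a>0$. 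The computation of the third paragraph identifies the vanishing of the moment map with \eqref{eq:ricn} and produces the metric explicitly, while uniqueness of the nilsoliton \cite{La1} guarantees that the diagonal solution so obtained is the nilsoliton of $\n$, closing both directions.
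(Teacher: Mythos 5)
First, a bookkeeping point: this paper does not prove Theorem~\ref{t:nice} at all --- it is imported verbatim from \cite[Theorem~3]{Ni4} --- so the comparison is necessarily with the proof there (whose mechanism is also visible in Remark~\ref{rem:explicit}). Your first three paragraphs are correct and reproduce the easy half of that mechanism faithfully: niceness does force $\ric$ to be diagonal, $\ric=-\tfrac12\diag(Y^t\alpha)$ with $\alpha_a=c_a^2$ is right, diagonal derivations are exactly $\diag(\mathbf d)$ with $Y\mathbf d=0$, and applying $Y$ to \eqref{eq:ricn} kills the kernel part and yields $YY^t\alpha=-2C[1]_m$. You also correctly identify the two real difficulties (realizability of a given positive solution $\alpha$, and reduction of an arbitrary nilsoliton to a diagonal one).

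The gap is in your last paragraph, where both difficulties are discharged by the asserted principle that for a weight-decomposed vector ``$G_\phi.\mu$ is closed if and only if the $T$-orbit is.'' No such general theorem exists for real reductive actions, even for sums of weight vectors with pairwise distinct weights: in the adjoint action of $\SL(2,\br)$ on $\sl(2)$, the element $X=H+E$ (with $H=\diag(1,-1)$ and $E$ the standard nilpotent) is semisimple, so $\SL(2,\br)\cdot X$ is closed, while the diagonal torus orbit $t\cdot X=H+t^2E$ is not. What makes the reduction work in the nice-basis setting is precisely the structure your first paragraph exposes but your last paragraph never uses: every diagonal scaling of $\mu$ again has diagonal Ricci, so the moment-map value along $T.\mu$ stays in $\Lie(T)\cap\g_\phi$, and a $T$-minimal vector is automatically $G_\phi$-minimal --- that gives ``$T$ closed $\Rightarrow$ $G_\phi$ closed,'' but it is a consequence of niceness, not of a formal torus reduction. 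For the converse (the ``only if'' direction) one must, when no positive solution exists, produce by a separation (Stiemke-type) argument a diagonal one-parameter subgroup of $G_\phi$ degenerating $\mu$ to a bracket with strictly smaller support, and then actually \emph{prove} that this limit is not isomorphic to $\mu$; your proposal contains no such argument. Similarly, the realizability issue when $\rk Y<m$ is settled in \cite{Ni4} not by quoting orbit theory but by minimizing the convex function $s\mapsto\sum_a c_a^2e^{-2(Ys)_a}+2\langle Y^t\alpha,s\rangle$, which is coercive modulo $\Ker Y$ exactly because $\alpha>0$; its critical point yields a positive solution $\alpha'$ that \emph{is} realized by a diagonal rescaling, and hence a genuine nilsoliton. (When $\rk Y=m$, as in the paper's Case~2, this step is vacuous --- which is why Remark~\ref{rem:explicit} can write the metric down directly --- but the theorem as stated needs it.) So: the computational reduction is sound and identical in spirit to \cite{Ni4}, but the crux of both implications is replaced by an appeal to a principle that is false in the stated generality.
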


In our case, the algebra $\n$ has a nice basis, which is precisely the basis $\{X_1, \ldots, X_q,Z_1,Z_2\}$,
relative to which the matrices
$J_1$ and $J_2$ have the form \eqref{eq:can2}. To construct the matrix $Y$, we order the nonzero brackets
as follows: first the brackets of the form $[X_i, X_j]=Z_1, \; i<j$, in the increasing order of the $i$'s, and
then the brackets of the form $[X_i, X_j]=Z_2, \; i<j$, again in the increasing order of the $i$'s.

From \eqref{eq:can2} it follows that
\begin{equation}\label{eq:Y}
\begin{gathered}
    Y=\begin{pmatrix}
         M_1 & [1]_{N-u''} & [0]_{N-u''} \\
         M_2 & [0]_{N-u'} & [1]_{N-u'} \\
       \end{pmatrix}, \quad \text{where} \;
    N=\sum\nolimits_{i=1}^u l_i + \sum\nolimits_{j=1}^v k_j, \\
    M_1=\bigoplus\nolimits_{i=1}^{u'} (I_{l_i} \, | \, I_{l_i})
    \oplus \bigoplus\nolimits_{i=u'+1}^u (L_{l_i-1} \, | \, R_{l_i-1})
    \oplus \bigoplus\nolimits_{j=1}^v (I_{k_j} \, | \, L_{k_j}), \\
    M_2=\bigoplus\nolimits_{i=1}^{u'} (L_{l_i-1} \, | \, R_{l_i-1})
    \oplus \bigoplus\nolimits_{i=u'+1}^u (I_{l_i} \, | \, I_{l_i})
    \oplus \bigoplus\nolimits_{j=1}^v (I_{k_j} \, | \, R_{k_j}).
\end{gathered}
\end{equation}

It is easy to see that $\rk \left(\begin{smallmatrix} I_l & I_l \\ L_{l-1} & R_{l-1}\end{smallmatrix}\right) = 2l-1$,
and $\rk \left(\begin{smallmatrix} I_k & L_k \\ I_k & R_k \end{smallmatrix}\right) = 2k$, so
$\rk Y = m: = 2N-u$, the number of rows of $Y$, hence the equation
$YY^t \a = [1]_{m}$ has a unique solution $\a$. From \eqref{eq:Y}, we obtain
\begin{equation}\label{eq:YYtform}
\begin{gathered}
    YY^t=\begin{pmatrix}
         2 I_{N-u''}+ [1]_{N-u''}[1]_{N-u''}^t & M_3^t \\
         M_3 & 2 I_{N-u'}+ [1]_{N-u'}[1]_{N-u'}^t \\
       \end{pmatrix}, \\
    M_3=\bigoplus\nolimits_{i=1}^{u'} (L_{l_i-1} + R_{l_i-1})
    \oplus \bigoplus\nolimits_{i=u'+1}^u (L_{l_i-1}^t + R_{l_i-1}^t)
    \oplus \bigoplus\nolimits_{j=1}^v (I_{k_j} + R_{k_j}L_{k_j}^t).
\end{gathered}
\end{equation}
An explicit form of the vector $\a$ satisfying the equation $YY^t \a = [1]_{m}$ can be obtained as follows.
Let $\nu_1$ and $\nu_2$ be two real numbers, which we will define later.
Introduce the vectors $U^{1i}, V^{1i}, \; i=1, \ldots, u'$, $U^{2i}, V^{2i}, \; i=u'+1, \ldots, u$, and
$U^{3j}, V^{3j}, \; j=1, \ldots, v$, with the components
\begin{equation}\label{eq:pieces}
\begin{aligned}
  (U^{1i})_t &=(\nu_2-\nu_1)t^2+(\nu_1-\nu_2)(l_i+1)t+\tfrac12(\nu_2(l_i+1)-\nu_1l_i) & \quad t&=1, \ldots, l_i, \\
  (V^{1i})_t &=(\nu_1-\nu_2)(t^2-l_it)  & \quad t&=1, \ldots, l_i-1, \\
  (U^{2i})_t &=(\nu_2-\nu_1)(t^2-l_it)  & \quad t&=1, \ldots, l_i-1, \\
  (V^{2i})_t &=(\nu_1-\nu_2)t^2+(\nu_2-\nu_1)(l_i+1)t+\tfrac12(\nu_1(l_i+1)-\nu_2l_i) & \quad t&=1, \ldots, l_i, \\
  (U^{3j})_t &=(2 k_j + 1)^{-1} (k_j + 1 - t) ( t(\nu_1- \nu_2)(2 k_j  + 1) + \nu_2 (k_j + 1) - \nu_1k_j )
    & \quad t&=1, \ldots, k_j, \\
  (V^{3j})_t &=(2 k_j + 1)^{-1} t ((k_j + 1 - t) (\nu_2- \nu_1)(2 k_j  + 1) + \nu_1 (k_j + 1) - \nu_2k_j )
    & \quad t&=1, \ldots, k_j.
\end{aligned}
\end{equation}
Let $U$ be an $(N-u'')$-dimensional vector-column whose components are the components of $U^{11}$, followed by the
components of $U^{12}$, and so on, up to the components of $U^{3v}$, and let
$V$ be an $(N-u')$-dimensional vector-column whose components are the components of $V^{11}$, followed by the
components of $V^{12}$, and so on, up to the components of $V^{3v}$. Let $\a$ be an $m$-dimensional
vector-column whose components are the components of the vector $U$, followed by the components of the vector $V$.
A direct computation using (\ref{eq:YYtform}, \ref{eq:pieces}) shows that
$YY^t \a = ((\nu_1+\<U,[1]_{N-u''}\>)[1]_{N-u''}, (\nu_2+\<V,[1]_{N-u'}\>)[1]_{N-u'})^t$,
so it remains to choose $\nu_1$ and $\nu_2$ satisfying the equations
$\nu_1+\<U,[1]_{N-u''}\>=\nu_2+\<V,[1]_{N-u'}\>=1$.
From \eqref{eq:pieces},
\begin{equation}\label{eq:lambdas}
\begin{gathered}
\nu_1=\Delta^{-1}(1+2L_2+N_1+N_2+K), \quad \nu_2=\Delta^{-1}(1+L_1+L_2+2N_2+K), \quad \text{where}\\
L_1 =\sum\nolimits_{i=1}^{u'} \tfrac{l_i^3+2l_i}{6} , \; L_2 =\sum\nolimits_{i=1}^{u'} \tfrac{l_i^3-l_i}{6} , \;
N_1 =\sum\nolimits_{i=u'+1}^u \tfrac{l_i^3+2l_i}{6} , \; N_2 =\sum\nolimits_{i=u'+1}^u \tfrac{l_i^3-l_i}{6} , \\
K_1=\sum\nolimits_{j=1}^v \tfrac{k_j(k_j+1)(k_j^2+k_j+1)}{3 (2k_j+1)},  \;
K_2=\sum\nolimits_{j=1}^v \tfrac{k_j(k_j+1)(2k_j^2+2k_j-1)}{6 (2k_j+1)}, \;
K=K_1+K_2, \\ 
\Delta=(1+L_1+N_2+K_1)(1+L_2+N_1+K_1)-(L_2+N_2+K_2)^2.
\end{gathered}
\end{equation}
Since $L_1 > L_2 \ge 0$ (if $u'>0$), $N_1 > N_2 \ge 0$ (if $u''>0$), and $K_1 >K_2 > 0$ (if $v>0$), we obtain
$\Delta > 0$, hence $\nu_1, \nu_2 >0$.

According to Theorem~\ref{t:nice}, the algebra $\n$ is an Einstein nilradical if and only if all the components of the
vector $\a$ given by \eqref{eq:pieces} (with $\nu_1, \nu_2$ given by \eqref{eq:lambdas}) are positive. It is immediate
from \eqref{eq:pieces} that $(V^{1i})_t, \; t >0$, and $(U^{2i})_t, \; t >0$, cannot be simultaneously positive.
So for all the components of $\a$ to be positive, it is necessary that
$l_i=1$, for all $i=u'+1, \ldots, u$ (as the labelling satisfies \eqref{eq:lex}).
Then from \eqref{eq:lambdas}, $N_2=0, \; N_1 = \frac12 u''$.
Moreover, by \eqref{eq:pieces}, $(V^{2i})_1=\frac12 \nu_2> 0$,
for all $i=u'+1, \ldots, u$, and there are no components $U^{2i}$.

From \eqref{eq:pieces}, the components $(U^{3j})_t$ and $(V^{3j})_t$ are positive if and only if
\begin{equation}\label{eq:u3jv3jpos}
1-\tfrac12 k_j^{-2} < \nu_1^{-1} \nu_2, \quad 1-\tfrac12 k_j^{-2} < \nu_2^{-1} \nu_1,
\quad \text{for all $j=1, \ldots, v$}.
\end{equation}
For the components $(U^{1j})_t$ and $(V^{1j})_t$ we consider three cases.

If $u'=0$ (hence $u''=0$ by \eqref{eq:lex}), then from \eqref{eq:lambdas} $\nu_1=\nu_2$, so \eqref{eq:u3jv3jpos}
are satisfied, hence all the components of $\a$ are positive.

If $u' > 0$ and $l_i=1$, for all $i=1, \ldots, u'$ (hence $u' \ge u''$ by \eqref{eq:lex}), then
$(U^{1i})_1=\frac12 \nu_1> 0$ and there are no components $V^{1i}$. Therefore, $\n$ is an Einstein nilradical if and
only if \eqref{eq:u3jv3jpos} are satisfied.
From \eqref{eq:lambdas}, this is equivalent to $(u'-u'') \, k_j^2 < 1 + \frac12 u' +K$, that is, to
$S_1 k_j^2 < S_2$, where $S_1, S_2$ are given by \eqref{eq:asc}.
Depending on whether $S_1=u'-u''$ is zero or is positive, we obtain condition (a) or (b) from (B) of
Theorem~\ref{t:twostep} respectively.

If $u'>0$ and $l_i >1$ for some $i=1, \ldots, u'$, then $(V^{1i})_t > 0$ gives $\nu_2 > \nu_1$, that is,
$S_1=\sum_{i=1}^{u'} l_i - u''>0$ by \eqref{eq:lambdas}. From the inequalities  $(U^{1i})_t > 0$ we obtain
$\left[\frac12 (l_i^2+1)\right] < (\nu_2-\nu_1)^{-1} \nu_2$
(where the brackets denote the integer part), and from \eqref{eq:u3jv3jpos} we get
$2k_j^2 < (\nu_2-\nu_1)^{-1} \nu_2$. Substituting $\nu_1$ and $\nu_2$ from \eqref{eq:lambdas} we find that
$\nu_2-\nu_1= \tfrac12 \delta^{-1} S_1, \; \nu_2= \delta^{-1} S_2$. So $\n$ is an Einstein nilradical if and only if
condition (b) from (B) of Theorem~\ref{t:twostep} is satisfied.
\qed

\begin{remark} \label{rem:explicit}
As $\rk \, Y=m$, in all the cases when $\n$ is an Einstein nilradical, the corresponding Einstein metric solvable Lie
algebra can be constructed explicitly.
Namely, define a $(q+2)$-dimensional vector $s$ by $Ys=(\sqrt{\a_1}, \ldots, \sqrt{\a_{m}})^t$.
Then a nilsoliton inner product on $\n$ can be taken as
$\<X_a, X_b\>= \exp(2 s_a) \delta_{ab}, \; a, b =1, \ldots, q+2$, where we denote $X_{q+1}=Z_1$ and $X_{q+2}=Z_2$
(see \cite[Theorem~1]{Pay} or the paragraph after the proof of Theorem~3 in \cite[Section~4]{Ni4}).
The eigenvalue type can be also easily found from \cite[Eq.~(11)]{Ni4}.

Also note that a two-step nilpotent algebra $\n$ defined by a singular pencil $xJ_1+yJ_2$ is always an Einstein
nilradical (as it follows from the above proof when $u=u'=u''=0$).
\end{remark}

\subsection{Case 3}
\label{ss:c3}

According to  Theorem~\ref{t:twostep}, a nilpotent algebra $\n$ from this case is an Einstein nilradical
if and only if $n_i=1$, for all $i= 1, \ldots, w$.

To prove that, we use \cite[Theorem~6]{Ni4}, which says that two real nilpotent Lie algebras whose complexifications are
isomorphic are or are not Einstein nilradicals simultaneously.

The complexification of $\n$ is a two-step nilpotent Lie algebra $\n^{\bc}$ defined by two complex skew-symmetric
matrices $J_1, J_2$ such that complex pencil $x J_1 + y J_2$ has the minimal indices $k_1, \ldots, k_v$, and the
reduced elementary divisors $(x+ z y)^{n_i}$ and $(x+ \overline{z} y)^{n_i}, \; 1 \le i \le w$, where
$z = \mu + \mathrm{i} \nu \in \bc \setminus \br$.
Replacing $J_1$ and $J_2$ by $\frac{\mathrm{i}}{2\nu} (\overline{z} J_1 - J_2)$ and
$\frac{\mathrm{i}}{2\nu} (-z J_1 - J_2)$ respectively we obtain the complex matrix pencil $x J_1 + y J_2$ with the
same minimal indices and with the reduced elementary divisors $x^{n_i}, y^{n_i}, \; 1 \le i \le w$.

The canonical form of such a pencil over $\bc$ is given by \cite[Theorem~6.8]{Gau} and coincides (up to permuting rows
and columns) with the canonical form \eqref{eq:can2} of Case~2, with $u'=u'' >0$ and $l_i=l_{i+u'}$, $i=1, \ldots, u'$.
The corresponding two-step nilpotent Lie algebra $\tilde \n$ is defined over $\br$, with $\n^\bc \simeq \tilde \n^\bc$.

The claim of Theorem~\ref{t:twostep} for the algebras from Case 3 now follows from \cite[Theorem~6]{Ni4} and
the proof for Case~2 given above (specifically, from condition (a) of part (B) of Theorem~\ref{t:twostep}).
\qed

\section{Two-step Einstein nilradicals and the duality}
\label{s:dual}

Recall that the dual to a two-step nilpotent Lie algebra $\n$ of type $(p,q)$, with $1 \le p < D$, defined by a point
$(J_1, \ldots, J_p) \in \mathcal{V}^0(p,q)$ is the two-step nilpotent Lie algebra $\n^*$ of type $(D-p,q)$ defined by
the point
$(J'_1, \ldots, J'_{D-p}) \in \mathcal{V}^0(D-p,q)$, where $J_\a'$ are linearly independent $q \times q$ skew-symmetric
matrices such that $\Tr (J'_\a J_\beta)=0$, for all $\a=1, \ldots, p,\; \beta=1, \ldots, D-p$.

By \cite[Proposition~7.6]{Eb3} (see also \cite[Proposition~2.9(iv)]{GK}), the two-step nilpotent Lie algebra dual to
an Einstein nilradical with the eigenvalue type $(1,2; q,p)$ is an Einstein nilradical, with the eigenvalue type
$(1,2; q, D-p)$. In Proposition~\ref{p:p=D-1} below, we show that every two-step nilpotent Lie algebra of type $(D-1,q)$
is an Einstein nilradical, which can be compared with the fact that every two-step nilpotent Lie algebra of type $(1,q)$
is an Einstein nilradical. However in general, for an arbitrary eigenvalue type, the algebra dual to a two-step Einstein
nilradical is not necessarily an Einstein nilradical. The following proposition shows that that might happen
even for the algebras of a very simple structure (other examples can be found in \cite[Section~5]{Ni4}).

\begin{proposition}\label{p:freeabdual} 
Let a two-step nilpotent Lie algebra $\n$ be a direct sum of the free two-step nilpotent Lie algebra
$\f(f,2)$ on $f \ge 2$ generators, and an abelian ideal $\br^a, \; a \ge 1$. Then:
\begin{enumerate}[\rm (i)]
  \item $\n$ is always an Einstein nilradical;
  \item the dual algebra $\n^*$ is an Einstein nilradical if and only if $a \ge f$.
\end{enumerate}
\end{proposition}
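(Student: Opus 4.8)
For part (i) I would argue directly from the literature: $\f(f,2)$ is an Einstein nilradical by \cite[Proposition~2.9]{GK}, and adjoining the abelian ideal $\br^a$ as a direct summand does not affect this property, by \cite[Proposition~3.3]{La2} (equivalently \cite[Theorem~7]{Ni4}). So all the work lies in (ii), and the first step there is to make the duality explicit. Taking the generators $X_1,\dots,X_f$ of $\f(f,2)$ together with a basis $W_1,\dots,W_a$ of $\br^a$ as a complement $\b$ to $\m=[\n,\n]$, the algebra $\n$ is of type $(\tfrac12 f(f-1),\,f+a)$ and its defining matrices span $\Span\{X_i\wedge X_j:1\le i<j\le f\}\subset\wedge^2\br^{f+a}$. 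Hence $\n^*$ is defined by a basis of the orthogonal complement, namely the forms $X_i\wedge W_j$ and $W_i\wedge W_j$; equivalently, $\n^*$ is the two-step algebra whose only nonzero brackets are $[X_i,W_j]$ and $[W_i,W_j]$ (with $[X_i,X_j]=0$), each equal to its own central basis vector.

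The key observation is that this basis of $\n^*$ is \emph{nice}: every bracket hits a single central vector, and no central vector is hit by two brackets. I would verify the two defining conditions directly and then invoke Theorem~\ref{t:nice}: $\n^*$ is an Einstein nilradical if and only if $YY^t\a=[1]_m$ admits a solution with all coordinates positive. Since the nice basis carries no relations, $Y$ has full row rank and $YY^t$ is invertible, so this solution is \emph{unique}; the problem thus reduces to a sign computation for a single explicit vector.

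To carry out that computation I would exploit symmetry. The group $S_f\times S_a$ permuting the $X$'s and the $W$'s acts on the brackets with exactly two orbits, the $fa$ cross brackets $[X_i,W_j]$ and the $\tfrac12 a(a-1)$ brackets $[W_i,W_j]$, and it fixes both $YY^t$ and $[1]_m$; hence the unique solution $\a$ is constant on each orbit, say $\rho$ on the cross brackets and $\tau$ on the $WW$ brackets. Reading off from the nice basis that two distinct brackets contribute $1$ to the corresponding entry of $YY^t$ exactly when they share a generator, a short count of the neighbours of a cross bracket and of a $WW$ bracket collapses $YY^t\a=[1]_m$ to
\begin{equation*}
(a+f+1)\rho+(a-1)\tau=1,\qquad 2f\rho+(2a-1)\tau=1.
\end{equation*}
Solving gives $\rho=a/\Delta$ and $\tau=(a+1-f)/\Delta$ with $\Delta=2a^2+a+f-1>0$. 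Thus $\rho>0$ automatically, while (for $a\ge 2$, when the $WW$-orbit is present) positivity of $\a$ reduces to $\tau>0$, i.e.\ to $a+1-f>0$, which for integers is exactly $a\ge f$. This yields the criterion of (ii).

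The routine parts are the verification of the two nice-basis conditions and the two neighbour counts feeding the linear system. The one step that needs genuine care is the symmetry argument: I must justify that $YY^t$ commutes with the $S_f\times S_a$ action, so that the unique solution is orbit-constant and the entire positivity question is decided by the two scalars $\rho$ and $\tau$. (I would also note separately that the boundary case $a=1$ produces no $WW$ brackets and hence a singular pencil, to be reconciled with the statement via Remark~\ref{rem:explicit}.)
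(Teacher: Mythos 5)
Your proposal is correct and follows the paper's own argument essentially step for step: part (i) by combining \cite[Proposition~2.9(iii)]{GK} with \cite[Proposition~3.3]{La2}, and part (ii) by applying Theorem~\ref{t:nice} to the evident nice basis of $\n^*$, using the full row rank of $Y$ together with the $S_f\times S_a$-symmetry (which is indeed routine, since the group permutes the rows of $Y$ and hence commutes with $YY^t$) to collapse $YY^t\a=[1]_m$ to two scalars, and your values $\rho=a/\Delta$, $\tau=(a+1-f)/\Delta$, $\Delta=2a^2+a+f-1$ coincide exactly with the paper's $a\delta$ and $(a+1-f)\delta$, $\delta=\Delta^{-1}$. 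Your parenthetical about $a=1$ is in fact more careful than the paper itself: in that case the $WW$-orbit is empty, the unique solution $\rho=1/(f+2)$ is positive, and Theorem~\ref{t:nice} then makes $\n^*$ an Einstein nilradical (it has a codimension-one abelian ideal) even though $a<f$, so the ``only if'' direction of (ii) really requires $a\ge 2$ --- a boundary case that the paper's one-line conclusion glosses over and that your argument correctly isolates.
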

\begin{proof}
Assertion (i) follows immediately by combining \cite[Proposition~2.9(iii)]{GK}
and \cite[Proposition~3.3]{La2}.

To prove (ii), one can use the fact that $\n^*$ has a nice basis (constructed in the obvious way), and then apply
Theorem~\ref{t:nice}. It is easy to see that $\rk Y = m$, where $m= fa+\frac12 a(a-1)$ is the number of rows of $Y$,
so the equation $YY^t \a = [1]_m$ has a unique solution. From the symmetries, it follows that $fa$ components of
$\a$ are equal to $a\delta$, and the remaining $\frac12 a(a-1)$ components are equal to $(a+1-f)\delta$, where
$\delta=(2a^2+a+f-1)^{-1} > 0$, which proves (ii).
\end{proof}

\begin{proposition}\label{p:p=D-1}
Any two-step nilpotent Lie algebra of type $(D-1,q)$ is an Einstein nilradical.
\end{proposition}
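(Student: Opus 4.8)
The plan is to normalise $\n$ by duality, dispose of the most symmetric case by the quoted duality result, and construct a nilsoliton explicitly in the remaining cases using the automorphisms of $\n$.

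Since $\n$ has type $(D-1,q)$, the span of its defining matrices $J_\a$ is a hyperplane in $\wedge^2\br^q$, and its $Q$-orthogonal complement is a line $\br K$ spanned by a single nonzero skew-symmetric matrix $K$ (equivalently, $\n^*$ is of type $(1,q)$, defined by $K$). Over $\br$ a skew-symmetric matrix is determined up to congruence by its rank $2r$, so after acting by $\GL(q)$ we may take $K=I^c_{2r}\oplus 0_{q-2r}$; thus $\n^*\cong H_{2r+1}\oplus\br^{q-2r}$. Writing $\br^q=V_1\oplus V_2$ with $V_1=\br^{2r}$, $V_2=\br^{q-2r}$, the kernel of the bracket $\wedge^2\br^q\to\m$ is exactly $\br K$, so $\m=\wedge^2\br^q/\br K$ and $[X_i,X_j]$ is the class of $X_i\wedge X_j$. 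Hence, up to isomorphism, $\n$ depends only on $q$ and on $r$, and it suffices to treat each of these algebras.

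When $q=2r$ (so $V_2=0$ and $K$ is nondegenerate) the dual $\n^*$ is the Heisenberg algebra $H_{q+1}$, whose pre-Einstein derivation has its two weights in ratio $1:2$, i.e. eigenvalue type $(1,2;q,1)$; the duality result of \cite{Eb3, GK} then gives at once that $\n=(\n^*)^*$ is an Einstein nilradical of eigenvalue type $(1,2;q,D-1)$. For $q>2r$ this is unavailable: computing the pre-Einstein derivation of $H_{2r+1}\oplus\br^{q-2r}$ shows that the Heisenberg generators and the abelian directions carry the different weights $\tfrac{r+1}{r+2}$ and $1$, so $\n^*$ is not of eigenvalue type $(1,2;q,1)$ and the duality theorem, stated only for that type, does not apply. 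For $q>2r$ I would therefore produce the nilsoliton directly through Lemma~\ref{l:twostepnil}. The compact group $G=\mathrm{U}(r)\times\O(q-2r)$, realised as the stabiliser of $K$ inside $\O(q)$, preserves $\br K$ and hence acts on $\n$ by automorphisms, so I would look for a $G$-invariant nilsoliton. Under $G$ one has $\b=V_1\oplus V_2$ and $\m=(\wedge^2V_1/\br K)\oplus(V_1\otimes V_2)\oplus\wedge^2V_2$, the induced diagonal derivation acting by $2a_1,\ a_1{+}a_2,\ 2a_2$ on these three summands, where $a_1,a_2$ are its weights on $V_1,V_2$. A suitable $G$-invariant choice of orthonormal bases makes the $J_\a$ a $Q$-orthonormal basis of $K^\perp$, giving $\sum_\a J_\a J_\a^t=\tfrac{q-1}2 I_q-\tfrac1{2r}(I_{2r}\oplus 0_{q-2r})$, already scalar on $V_1$ and on $V_2$, while $\Tr J_\a J_\beta^t$ is diagonal and constant on each $G$-isotypic piece of $\m$ by equivariance. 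Applying part~2 of Lemma~\ref{l:twostepnil} with $g=g_1\oplus g_2$ constant on each isotypic component and imposing the two identities \eqref{eq:twosteEinstein} turns the problem into a finite algebraic system in the scaling scalars, $a_1,a_2$ and $C$, the only coupling being that $\Tr\tilde J_\a\tilde J_\beta^t$ must take a single value on the two irreducible pieces of $\wedge^2V_1/\br K$, since both carry the derivation weight $2a_1$.

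The hard part is the last step: showing that this system has a genuinely positive solution (positive-definite metric and $C<0$) uniformly in $q$ and $r$. The delicate contribution is that of $\wedge^2V_1/\br K$, the only place where $\n$ departs from the free algebra $\f(q,2)$: the relation $\sum_{s=1}^r\overline{X_{2s-1}\wedge X_{2s}}=0$ (equivalently the $Q$-projection off $\br K$) alters the structure constants of the $V_1$-generators, and hence their Ricci eigenvalues. One must split $\wedge^2V_1$ into its parts commuting and anticommuting with $K|_{V_1}$ (of dimensions $r^2-1$ and $r^2-r$ after removing $\br K$), evaluate the relevant squared-norm sums on each, and verify the resulting positivity. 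Alternatively, and perhaps more cleanly, existence could be obtained as in Case~1: restrict the normalised squared norm of the Ricci tensor \cite[Theorem~2.2]{La4} to the symmetric space of $G$-invariant inner products and show it is geodesically convex and proper, so that a critical point — the nilsoliton — exists; properness again reduces to the same estimate on the $\wedge^2V_1/\br K$ block.
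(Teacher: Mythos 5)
Your strategy is the same as the paper's: normalise the dual line to $\br K$ with $K=\sk{I_d}\oplus 0_l$ (your $r=d$, $q-2r=l$), dispatch the nondegenerate case $l=0$ via the duality result of \cite{Eb3,GK}, and for $l>0$ construct a nilsoliton through Lemma~\ref{l:twostepnil} using the invariant decomposition of $K^\perp$ into the three blocks $L_1=\{K'\oplus 0_l : K'\in\wedge^2\br^{2d},\ \Tr(\sk{I_d}K')=0\}$, $L_2=\{\sk{T} : T\in\Mat(2d,l)\}$, $L_3=0_{2d}\oplus\wedge^2\br^l$, with one scaling per block and a diagonal derivation with weights $a_1,a_2$ on $V_1,V_2$ --- this is exactly the paper's setup. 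But your argument stops precisely where the content of the proposition begins: you never write down or solve the resulting algebraic system, and you explicitly defer the positivity verification (``the hard part''), offering only a sketched variational alternative that is likewise not carried out. As it stands, this is a genuine gap, since solvability with a positive-definite metric and $C<0$ is the whole assertion.

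Moreover, the obstruction you anticipate is illusory, and seeing this is what makes the computation terminate quickly. There is no need to split $\wedge^2 V_1/\br K$ into its two $\mathrm{U}(d)$-irreducible pieces of dimensions $d^2-1$ and $d^2-d$: take a \emph{single} scaling $r_1$ on all of $L_1$, i.e.\ a basis of $L_1$ orthonormal for $r_1^{-2}Q_0$. Then $\Tr J_\a J_\beta^t=r_1^2\delta_{\a\beta}$ is automatically constant across both pieces (so your ``coupling'' condition holds for free), and $\sum_{J_\a\in L_1}J_\a J_\a^t=\tfrac{2d^2-d-1}{2d}\,r_1^2\, I_{2d}$ is automatically scalar on $V_1$ --- your own formula $\sum_\a J_\a J_\a^t=\tfrac{q-1}{2}I_q-\tfrac1{2d}(I_{2d}\oplus 0_l)$ for the unscaled basis of $K^\perp$ already exhibits this. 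With scalings $r_1,r_2,r_3$ on $L_1,L_2,L_3$ one gets $\sum_\a J_\a J_\a^t=\bigl(\tfrac{2d^2-d-1}{2d}r_1^2+\tfrac{l}{2}r_2^2\bigr)I_{2d}\oplus\bigl(\tfrac{l-1}{2}r_3^2+dr_2^2\bigr)I_l$, and the requirement from \eqref{eq:twosteEinstein} that the resulting diagonal $\phi$ be a derivation is, by \eqref{eq:der2step}, the small linear system $2\la_1=\la_3$, $\la_1+\la_2=\la_4$, $2\la_2=\la_5$ (the last equation omitted when $l=1$). Solving it with $c=-\tfrac14\bigl((5+2q^2-3q)d+2-4q\bigr)<0$ gives the explicit values $r_1^2=d(q-1)$, $r_2^2=qd-d-1$, $r_3^2=qd-d-2$, which are positive in all cases except $r_3=0$ at $d=l=1$, where $\dim L_3=0$ and $r_3$ is not needed. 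This explicit solution, together with the check of the degenerate small cases, is the actual proof in the paper and is exactly what your proposal omits; the convexity route you mention as an alternative would in any case require the same block-by-block estimates you left unevaluated.
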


\begin{proof}
We will prove the proposition by explicitly constructing the nilsoliton inner product
(the proof is similar to that of Lemma~6 in the unpublished preprint \cite{Ni1}). Let $\n=\b\oplus\m$
be a two-step nilpotent Lie algebra of type $(D-1,q)$. Its dual algebra $\n^*$ is of type $(1,q)$ and hence is
defined by a single nonzero skew-symmetric $q \times q$ matrix $J= \sk{I_d} \oplus 0_l$, with $2d+l=q$ and $d >0$.

If $l=0$, then $\n^*$ is the Heisenberg algebra and is an Einstein nilradical with the eigenvalue type $(1,2; q, 1)$,
so the claim follows from \cite[Proposition~7.6]{Eb3}, \cite[Proposition~2.9(iv)]{GK}. We will therefore assume that
$l >0$. Let $Q_0$ be the inner product on $\wedge^2 \br^q$ defined by $Q_0(K_1, K_2) = -\Tr (K_1 K_2)$.

Consider the following three subspaces of $\wedge^2 \br^q$:
\begin{equation*}
\begin{gathered}
    L_1 = \{K \oplus 0_l \; | \; K \in \wedge^2 \br^{2d}, \Tr(\sk{I_d} K)=0  \}, \\
    L_2 = \{\sk{T} \; | \; T \in \Mat(2d,l)\}, \quad
    L_3 = \{0_{2d} \oplus N \; | \; N \in \wedge^2 \br^l  \}.
\end{gathered}
\end{equation*}
Then $(\br J)^{\perp}_0=L:=L_1 \oplus L_2 \oplus L_3$, and the algebra $\n$ is defined by an arbitrary basis of $L$.
Introduce an inner product $Q$ on $L$ by scaling $Q_0$ on each of the $L_i$'s:
$Q(K_1, K_2)= \K_{ij} r_i^{-2} Q_0(K_1, K_2)$, for $K_1 \in L_i$, $K_2 \in L_j, \; i,j=1,2,3, \; r_i > 0$, and
choose a basis $\{J_\a\}$ for $L$, which is orthonormal with respect to $\ip$ and is the union of bases
for $L_1, L_2$, and $L_3$.

Let $\{X_i\}$ and $\{Z_\a\}$ be the bases for $\b$ and $\m$ respectively such that
$[X_i,X_j]=\sum_{\a=1}^{D-1} (J_\a)_{ij}Z_\a$, and let $\ip$ be an inner product on $\n$ such that the basis
$\{X_i, Z_\a\}$ is orthonormal.

Then for $J_\a \in L_i,\, J_\beta \in L_j$, we have $\Tr J_\a J_\beta^t = r_i^2\K_{\a\beta}$ and, by an
easy computation,
$\sum\nolimits_{\a=1}^{D-1} J_\a J_\a^t= (\frac{2d^2-d-1}{2d}r_1^2+\frac{l}{2} r_2^2)I_{2d} \oplus
(\frac{l-1}{2}r_3^2+d r_2^2)I_l$.
Substituting this to \eqref{eq:twosteEinstein} we find that, relative to the basis
$\{X_i, Z_\a\}, \quad \phi=\la_1 I_{2d} \oplus \la_2 I_l \oplus \la_3 I_{d(2d-1)} \oplus \la_4 I_{2dl}
\oplus \la_5 I_{\frac12 l(l-1)}$, where $\la_1=1+(2c)^{-1}(\frac{2d^2-d-1}{2d}r_1^2+\frac{l}{2} r_2^2)$,
$\la_2=1+(2c)^{-1}(\frac{l-1}{2}r_3^2+d r_2^2)$, and $\la_i=1-(4c)^{-1}r_{i-2}^2$ for $i=3,4,5$. According
to Lemma~\ref{l:twostepnil}, the inner product $\ip$ on $\n$ is nilsoliton (and hence $\n$ is an Einstein nilradical),
if $\phi$ is a derivation.
By \eqref{eq:der2step}, this condition is equivalent to the system of equations
$2\la_1=\la_3, \; \la_1+\la_2=\la_4$, and $2\la_2=\la_5$ (with the latter equation omitted, if $l=1$).
Solving it, with the scaling constant $c=-\frac14 ((5+2q^2-3q)d+2-4q) <0$, we find
\begin{equation*}
r_1^2=d(q-1), \; r_2^2= qd-d-1, \; r_3^2=q d - d - 2.
\end{equation*}
The right-hand sides are always positive, with the only exception: $r_3=0$ when $d=l=1$, which does not cause any
problem, as $\dim L_3 = 0$ when $l=1$ (so we don't need $r_3$ anyway).
\end{proof}

\begin{remark} \label{rem:duo}
The following observation could be a possible first step in finding Einstein nilradicals among the two-step nilpotent
Lie algebras whose duals are well understood.
By \eqref{eq:der2step}, to every derivation $\psi=A_1 \oplus M \in \Der(\n)$ there corresponds a derivation
$\psi'=A_1^t \oplus M' \in \Der(\n^*)$. This correspondence is actually an isomorphism between the subalgebras
of $\Der(\n)$ and $\Der(\n^*)$. In particular, it maps the maximal tori of $\Der(\n)$ onto the maximal tori
of $\Der(\n^*)$. This may help to find a pre-Einstein derivation for $\n^*$ when a maximal torus of $\Der(\n)$
is known (note however, that the image of a pre-Einstein derivation is not necessarily a pre-Einstein derivation).
\end{remark}


\end{document}